\newcommand{\R}{{\mathbb R}}
\newcommand{\ds}{{\displaystyle}}
\newcommand{\es}{\mathop{\rm essinf}\limits}
\newcommand{\dist}{{\rm dist}}
\newcommand{\ol}{\overline}%%%%%%%
\newcommand{\C}{{\mathcal C}}
\newcommand{\CC}{{\mathfrak C}}
\newcommand{\DD}{{\mathfrak D}}
\newcommand{\E}{{\mathcal  E}}
\newcommand{\M}{{\mathcal M}}
\newcommand{\SSS}{{\mathcal S}}
\newcommand{\ba}{{\mathbf a}}
\newcommand{\bF}{{\mathbf F}}
\newcommand{\bA}{{\mathbf A}}
\newcommand{\bu}{{\mathbf u}}
\newtheorem{thm}{Theorem}[section]
\newtheorem{lem}[thm]{Lemma}
\newtheorem{cor}[thm]{Corollary}
 \newtheorem{defn}[thm]{Definition}
\newtheorem{example}[thm]{Example}
\newtheorem{rem}[thm]{Remark}
\numberwithin{equation}{section}
\begin{document}

\title[Parabolic operators in weighted Lebesgue spaces]{Global gradient estimates in weighted Lebesgue spaces for parabolic operators}

\author[S.-S. Byun]{Sun-Sig Byun}
\address{Department of Mathematical Sciences and Research Institute of Mathematics, Seoul National University,
Seoul 151-747, Korea}
\email{byun@@snu.ac.kr}
\author[D.K. Palagachev]{Dian K. Palagachev}
\address{Dipartimento di Meccanica, Matematica e Management (DMMM), Politecnico di Bari, Via E. Orabona 4, 70125 Bari, Italy}
\email{palaga@poliba.it}
\author[L.G. Softova]{Lubomira G. Softova}
\address{Department of Civil Engineering, Design, Construction Industry and Environment,
  Second University  of Naples, Via Roma 29, 81031 Aversa,   Italy}
\email{luba.softova@unina2.it}

\subjclass[2000]{Primary: 35K20, 35R05; Secondary: 35B65, 35B45, 46E30, 35K40, 42B25, 74E30}

\date{\today}

\keywords{Weighted Lebesgue space; Muckenhoupt weight; Parabolic system; Cauchy--Dirichlet problem; Measurable coefficients; BMO; Gradient estimates; Morrey space; Linear laminates}

\begin{abstract}
We deal with the regularity problem for linear, second order pa\-ra\-bo\-lic equations and systems in divergence form with
measurable data over non-smooth domains,
related to variational problems arising in the modeling of composite materials and in the mechanics of membranes and films of simple non-homogeneous materials which form a linear laminated medium.
Assuming partial BMO smallness of the coefficients
 and Reifenberg flatness of the boundary of the underlying domain, we develop
a Calder\'{o}n--Zygmund type theory for such parabolic operators in the settings of the
weighted Lebesgue spaces. 
As consequence of the main result, we get regularity 
in parabolic Morrey scales for the spatial gradient of the weak solutions to the problems considered.  
\end{abstract}

\maketitle

\section{Introduction}\label{sec1}

The general aim of the present article is to develop a \textit{weighted $L^p$-Calder\'{o}n--Zygmund}  type theory for divergence form, linear parabolic systems with discontinuous coefficients over domains with rough boundary. More precisely, we characterize the regularity of the weak solutions to such systems by deriving global estimates for the spatial gradient in the framework of the weighted Lebesgue spaces, generalizing this way the recent unweighted $L^p$-results of
\textsc{Byun}~\cite{B1,B}, \textsc{Byun, Palagachev} and \textsc{Wang}~\cite{BPW}, \textsc{Byun} and \textsc{Wang}~\cite{BW}, \textsc{Dong} and \textsc{Kim}~\cite{DongKim} and \textsc{Dong}~\cite{Dong}.

Let $\Omega\subset \R^n$ be a bounded domain, $n\geq2,$ and set $Q=\Omega\times(0,T]$ for the cylinder in $\R^{n+1}$ with base $\Omega$ and of height $T.$ We  consider the following
Cauchy-Dirichlet problem
\begin{equation}\label{DP}
\begin{cases}
\ds u_t-  \, D_\alpha(a^{\alpha\beta}(x,t)D_\beta u)= D_\alpha f^\alpha(x,t) &
\ds \text{ in }  Q,\\
\ds  u(x,t)=0 & \text{ on } \partial_P Q,
\end{cases}
\end{equation}
where $\partial_P Q=\partial \Omega \times [0,T] \cup \Omega \times
\{t=0\}$ stands for the parabolic boundary of $Q$ and the summation convention over the repeated indices, running from $1$ to $n,$ is understood.

Suppose that the coefficient matrix $ \ba(x,t)
=\{a^{\alpha\beta}(x,t)\}_{\alpha,\beta=1}^n: \R^{n+1} \to \mathbb{M}^{n\times n}$ is measurable, uniformly bounded and uniformly parabolic, that is,
there exist positive constants $L$ and $\nu$ such that
\begin{equation}\label{eq3}
\begin{cases}
\|a^{\alpha\beta}\|_{L^\infty(\R^{n+1})}\leq L,\\
 a^{\alpha\beta}(x,t)\xi_\alpha\xi_\beta \geq
\nu |\xi|^2 \qquad \forall \  \xi \in \R^n,  \  \text{for almost all}\
(x,t)\in \R^{n+1}.
\end{cases}
\end{equation}
Denote the nonhomogeneous term in \eqref{DP} by $\bF(x,t)
=\big(f^1(x,t),\ldots, f^n(x,t)\big).$ It is well known (cf. \textsc{Byun}~\cite{B}, \textsc{Byun} and \textsc{Wang}~\cite{BW} and the references therein) that if  $\bF\in L^2(Q)$ then the
problem \eqref{DP} has a unique weak solution. Recall  that  a {\it
weak solution} of this problem is a function
$$
u\in C^{0}(0,T; L^2(\Omega))\cap  L^2(0,T; H_0^1(\Omega))
$$
that satisfies
$$
\int_Q u\varphi_t\, dxdt-\int_Q a^{\alpha\beta}D_\beta u D_\alpha
\varphi\, dxdt= \int_Q f^\alpha D_\alpha \varphi\, dx dt
$$
for all $\varphi\in C_0^\infty(Q)$ with $\varphi(x,T)=0.$ Moreover, the
following  $L^2$-estimate holds
\begin{equation}\label{eq5}
\int_Q |Du|^2\, dxdt \leq c \int_Q |\bF|^2\,dxdt,
\end{equation}
where the constant $c$ depends only on $n,$ $L,$ $\nu$ and $T.$

A natural extension of \eqref{eq5} would be the estimate
$$
\int_Q |Du|^p\, dxdt \leq c \int_Q |\bF|^p\,dxdt
$$
with $p>1$ or, more generally,
\begin{equation}\label{eq5'}
\int_Q |Du|^p \omega(x,t)\, dxdt \leq c \int_Q |\bF|^p \omega(x,t)\,dxdt,
\end{equation}
with suitable conditions imposed on the exponent $p$ and the weight $\omega(x,t).$

Indeed, the sole parabolicity of the differential operator considered and boundedness of the underlying domain $\Omega$ are not enough to ensure the validity of \eqref{eq5'} in general. In order to have \eqref{eq5'} for the weak solution to \textit{any} system \eqref{DP}, one has to impose some regularity requirements on the coefficient matrix $\ba,$ some finer geometric assumption on $\partial\Omega$ and suitable conditions on the  weight function $\omega.$

What is our main concern in the present paper is to indicate that set of essentially optimal
hypotheses  on the data of \eqref{DP} which ensure \eqref{eq5'}, and to develop a Calder\'{o}n--Zygmund type theory for the problem under consideration. Namely, taking the nonhomogeneous term $\bF$ in
the weighted Lebesgue space $L^p_\omega(Q)$ (see Sections~\ref{sec2} and \ref{sec3} for the corresponding definitions) we prove that the spatial gradient $Du$ of
the weak solution $u$ to \eqref{DP} belongs to the \textit{same} space $L^p_\omega(Q),$ what is actually the estimate \eqref{eq5'}.

Restricting the value of the exponent $p$ in the range $(2,\infty),$ we consider weights $\omega(x,t)$ belonging to the parabolic Muckenhoupt class $A_{\frac{p}{2}}.$ This is a \textit{necessary and sufficient} restriction ensuring boundedness of the Hardy--Littlewood maximal
operator when acting on the weighted Lebesgue spaces $L^p_\omega.$ For what concerns the coefficients $a^{\alpha\beta}(x,t)$ of the operator considered, we suppose these are \textit{only measurable} with respect to one spatial variable and are averaged in the sense of \textit{small bounded mean oscillation (BMO)} in the remaining space and time variables. This \textit{partially BMO} assumption on the coefficients is quite general and allows \textit{arbitrary} discontinuity in one spatial direction which is often related to problems of linear laminates, while the behaviour with respect to the other directions, including the time, are controlled in terms of small-BMO, such as small multipliers of the Heaviside
step function for instance. It is clear that the cases of continuous, VMO or small-BMO principal
coefficients with respect to \textit{all variables} are particular cases of the situation here considered. Regarding the underlying domain $\Omega,$ we suppose that its \textit{non-smooth boundary}  is \textit{Reifenberg flat} (cf. \textsc{Reifenberg}~\cite{R}) that means $\partial\Omega$
is well approximated by hyperplanes at each point and at each scale.
This is a sort of minimal regularity of the boundary, guaranteeing validity in $\Omega$
of some natural properties of geometric analysis and partial differential equations such as
$W^{1,p}$-extension, nontangential accessibility property, measure density condition,
the Poincar\'e inequality and so on. We refer the reader to
\textsc{David} and \textsc{Toro}~\cite{DT}, \textsc{Kenig} and \textsc{Toro}~\cite{KT}, \textsc{Lemenant, Milakis} and \textsc{Spinolo}~\cite{LMS},
\textsc{Toro}~\cite{Toro}
and the references therein for further details. In particular, a domain which is sufficiently flat in the sense of Reifenberg is also Jones flat. Moreover, domains with $C^1$-smooth or Lipschitz continuous
boundaries with small Lipschitz constant belong to that category, but the class of Reifenberg flat domains extends beyond these common examples and contains domains with rough fractal boundaries such
as the Helge von Koch snowflake.

It is worth noting that the weighted $L^p$-regularity theory here developed is related to important variational problems arising in modeling of deformations in composite materials as fiber-reinforced media or, more generally, in the mechanics of membranes and films of simple non-homogeneous materials which form a linear laminated medium. In particular, a highly twinned elastic or ferroelectric crystal is a typical situation where a laminate appears. The equilibrium equations of such a linear laminate usually have only bounded and measurable coefficients in the
direction of the stratification. We refer the reader to the seminal papers by \textsc{Chipot, Kinderlehrer} and \textsc{Vergara-Caffarelli}~\cite{CKV-C}, \textsc{Li} and \textsc{Vogelius}~\cite{LV} and \textsc{Li} and \textsc{Nirenberg}~\cite{LN} for the general statement of the problem and various issues regarding regularity of solutions in case of piecewise smooth coefficients, and to the more recent works of
\textsc{Elschner, Rehberg} and \textsc{Schmidt}~\cite{ERS}, \textsc{Dong}~\cite{Dong} and
\textsc{Hackl, Heinz} and \textsc{Mielke}~\cite{HHM} for further developments. The
non-smoothness of the underlying Reifenberg flat domain, instead, is related to models of
real-world systems over media with fractal geometry such as blood vessels, the internal structure of lungs, bacteria growth, graphs of stock market data, clouds, semiconductor
devices, etc.

The paper is organized as follows. Section~2  collects some auxiliary results from the harmonic analysis regarding the properties of the Muckenhoupt weights and boundedness of the Hardy--Littlewood maximal operator on the weighted Lebesgue spaces, while in Section~3 we set down the hypotheses on the data of problem \eqref{DP} and state the main result of the paper (Theorem~\ref{MainTh}).
The estimate in the weighted Lebesgue spaces $L^p_\omega$ of the spatial gradient $Du$ of the weak solution to \eqref{DP} is proved in
Section~4. The main analytic tools employed in that proof rely on the Vitali covering lemma, boundedness properties of the Hardy–-Littlewood maximal operator on weighted spaces, power decay estimates of the upper level sets of the spatial gradient and fine properties of the Muckenhoupt weights $\omega(x,t).$ As an outgrowth of the main result, we show
in Section~5 that the Calder\'{o}n--Zygmund property still holds true in the framework of the
parabolic Morrey scales $L^{p,\lambda}$ by showing that $\bF\in L^{p,\lambda}$ yields
$Du\in L^{p,\lambda}.$ A crucial step of our approach here is ensured by an old result of
\textsc{Coifman} and \textsc{Rochberg}~\cite{CR} ensuring that a suitable power of the maximal operator of a characteristic function is an $A_1$-weight. Without essential difficulties, the technique employed in proving regularity of solution to the equation in \eqref{DP} could be extended to the case of systems and, that is why, in the final Section~6 we restrict ourselves to announce only the weighted $L^p$-regularity result for the weak solutions to linear, second order parabolic systems with partially BMO coefficients over Reifenberg flat domains.

To this end, let us note that in the case of \textit{elliptic} equations, weighted $L^p$-regularity results have been proved by \textsc{Mengesha} and \textsc{Phuc}~\cite{MP,MP2} under the small-BMO assumption with respect to \textit{all variables}, and by \textsc{Byun} and \textsc{Palagachev}~\cite{BP,BP2} for equations with \textit{partially} BMO coefficients. To the best of our knowledge, the results here obtained are the first of this kind in the settings of parabolic weighted spaces.

It is worth also noting that regularity of solution in Morrey spaces have been recently derived in \textsc{Softova}~\cite{Sf3} for linear, non divergence form operators with oblique derivative boundary condition by means of estimates for singular integrals of Calder\'on-Zygmund type.
Moreover, the Morrey regularity results from Section~\ref{sec5} could be extended in the more general framework of \textit{generalized} Morrey spaces (see \textsc{Guliyev} and \textsc{Softova}~\cite{GSf} and \textsc{Softova}~\cite{Sf1,Sf5}).

Throughout the paper, the letter $c$ will denote  a universal constant that can be explicitly computed in terms of known quantities such as $n,$ $L,$ $\nu,$ $p,$ $\omega$ and the geometric structure on $Q.$ The exact value of $c$ may vary from one occurrence to another.

\subsection*{Acknowledgements.} S.-S. Byun was supported by the National Research Foundation of Korea(NRF) grant funded by the Korea government(MSIP) (No.2009-0083521). The research of D.K. Palagachev was partially supported by the MIUR-PRIN 2009 project \textit{``Metodi variazionali ed equazioni differenziali non lineari''.}

\section{Weighted Lebesgue spaces in parabolic settings}
\label{sec2}

Our aim is to establish a global weighted estimate of 
Calder\'on-Zygmund type for the weak solution of \eqref{DP} and let us
start with  describing  the properties of  the class of weights
considered. For, we will use the parabolic metric given
in \textsc{Stein}~\cite{St} by the
function
$$
\rho(x,t)=\sqrt{\frac{|x|^2+\sqrt{|x|^4+4|t|^2}}{2} } \qquad\quad
(x,t) \in \R^{n+1}.
$$
It is proved in \textsc{Fabes} and \textsc{Rivi\`ere}~\cite{FR} that $\rho$ defines a metric in $\R^{n+1}$ and
the ``balls'' with respect to it, centered at
$(x,t)$ and of  radius $r>0,$ are  the ellipsoids   $\E = \E_r(x,t)$
$$
\E_r(x,t)=\left\{ (y,\tau) \in  \R^{n+1}: \  \frac{|x-y|^2}{r^2}+
\frac{|t-\tau|^2}{r^4}<1  \right\},
$$
or, in an alternative way,
$$
\E_r(x,t)=\left\{ (y,\tau) \in  \R^{n+1}: \rho(x-y,t-\tau)< r\right\}.
$$

Let $\mu$ be a  nonnegative Borel measure on
$\R^{n+1}$ with the property $\mu(\R^{n+1})>0.$ In the particular case when $\mu$ is the
Lebesgue measure, then $\mu(\E_r)=|\E_r|=c r^{n+2}$ with a
positive constant $c=c(n).$ Let us note that for all points $(x,t),
(y,\tau)\in \R^{n+1}$ and $r>0,$ the collection of such ellipsoids
and the measure that we postulate satisfy the following properties (cf. \textsc{Stein}~\cite{St}):
there exist constants  $c_1$
 and $c_2,$ both greater than 1 and depending on $n,$ such that
\begin{itemize}
\item[{\bf (i)}] \   $\E_r(x,t)\cap \E_r(y,\tau)\not= \emptyset$ implies $\E_r(y,\tau)\subset
\E_{c_1r}(x,t);$
\item[{\bf (ii)}] \   $\mu(\E_{c_1r}(x,t))\leq c_2 \mu(\E_r(x,t));$
\item[{\bf  (iii)}] For each  open set $U$ and $r>0,$ the function $(x,t)\to \mu(\E_r(x,t)\cap U)$ is continuous.
\end{itemize}
Statement {\bf (i)} guarantees the engulfing property crucial in the
Vitali-type covering lemma that we are going to use,  while the
doubling type property {\bf (ii)} of the measure just allows one to
exploit the first statement. In our further considerations we shall
also use the  collection of cylinders  $\C\equiv
\C_r(x,t)=\C_r(x_1,x^\prime,t)$ defined as
\begin{equation}\label{Cr}
\C_r(x,t)=\{(y_1,y',\tau)\in \R^{n+1}:   |x_1-y_1|<r, \
 \rho(x'-y',t-\tau)<r\},
\end{equation}
or, in an alternate  way
\begin{equation}\label{Cr1}
 \C_r(x,t)=\{(y_1,y',\tau)\in \R^{n+1}:   |x_1-y_1|<r, \   \max\{|x'-y'|,\sqrt{|t-\tau|}\} <r
 \}
\end{equation}
with the Lebesgue measure $|\C_r|$ comparable to $r^{n+2},$ and where
$x'=(x_2,\ldots,x_n).$
\begin{rem}\label{rem1}
\em  It is  not
difficult to verify that {\bf (i)}, {\bf (ii)} and  {\bf (iii)}
hold for the collections \eqref{Cr} and \eqref{Cr1}.  In what
follows we will use the equivalence of these structures without explicit
references (cf. \textsc{Stein}~\cite{St}). All definitions given over ellipsoids
$\E_r$ hold also over the cylinders $\C_r.$
\end{rem}

In case of $\R^n$ we shall use also the following collection of
cylinders
$$
\C'_r(x)=\{y\in\R^n: \  |x_1-y_1|<r, |x'-y'|<r\}.
$$
To define the functional spaces to be used in the sequel, we need to
recall the definition and some properties of the Muckenhoupt weights
(cf. \textsc{Garc\'{\i}a-Cuerva} and \textsc{Rubio de Francia}~\cite{GR},
\textsc{Stein}~\cite{St} and \textsc{Torchinsky}~\cite{To}). Let $\M$ denote the Hardy--Littlewood  maximal
operator on $\R^{n+1}$
$$
\M f(x,t)=\sup_{r>0} \frac1{|\E_r(x,t)|}  \int_{\E_r(x,t)}
|f(y,\tau)|\, dy d\tau, \quad  f\in L^1_{\textrm{loc}}(\R^{n+1}).
$$
If $D$ is a bounded domain in $\R^{n+1}$ and $f\in L^1(D),$
then $\M f=\M\bar f,$ where $\bar f$ is the zero extension of $f$ in
the whole space.  It is well known that $\M$ is bounded sub-linear
operator from $L^q$ to itself for all $q>1.$ That is, if  $f\in
L^q(\R^{n+1}), q\in(1,\infty),$ then
\begin{equation}\label{max_ineq}
\int_{\R^{n+1}}|\M f(x,t)|^q \,d\mu(x,t)\leq
c\int_{\R^{n+1}}|f(x,t)|^q\, d\mu(x,t)
\end{equation}
for some positive constant $c=c(q,n),$ where $d\mu=dxdt$ is  the
Lebesgue measure. It turns out that the estimate \eqref{max_ineq}
still holds true when $d\mu=\omega(x,t)dxdt,$ where
$\omega:\R^{n+1}\to \R_+$ is a positive, locally integrable
function, satisfying the following {\it $A_q$-condition}
\begin{equation}\label{Aq}
\left(\frac1{|\E|}\int_\E \omega(x,t)\, dxdt
\right)\left(\frac1{|\E|}\int_\E \omega(x,t)^{-\frac{1}{q-1}}\, dxdt
\right)^{q-1}\leq A< \infty
\end{equation}
for all  $\E$ in $\R^{n+1}.$ It is proved by \textsc{B.~Muckenhoupt} in \cite{Muck} that \eqref{Aq} is a \textit{necessary and sufficient} condition in order \eqref{max_ineq} to hold. By this reason, $\omega$ is commonly called  \textit{Muckenhoupt  weight} lying in the class $A_q,$ and the smallest constant
$A$  for  which   \eqref{Aq} holds is denoted by $[\omega]_q.$
If  $q=1,$ we say that $\omega\in A_1$ when
\begin{equation}\label{A1}
\frac1{|\E|} \int_\E \omega(x,t)\,dxdt \leq A \es_\E \, \omega(x,t).
\end{equation}

There is an alternative way of defining $A_q,$ closely related to the
maximal inequality  \eqref{max_ineq}. For any
nonnegative,  locally  integrable function $f$ and any ellipsoid
$\E,$  the weight $\omega$ belongs to $A_q,$ $1\leq q<\infty,$ if
and only if
\begin{equation}\label{weight2} 
\left( \frac1{|\E|}\int_{\E}  f(x,t)\, dxdt
\right)^q \leq \frac{A}{\omega(\E)} \int_{\E} f^q(x,t) \omega(x,t)\,
dxdt<\infty
\end{equation}
for some   $A=A(q,n)>0,$ where
\begin{equation}\label{weight3}
\omega(\E) = \int_{\E}\omega(x,t)\, dxdt<\infty
\end{equation} is the measure of $\E$ with respect to
$d\mu=\omega(x,t)dxdt.$ The smallest $A$ for which \eqref{weight2}
is valid equals $[\omega]_q.$ It is an immediate consequence of
\eqref{weight2} that whenever $\omega\in A_q$ then it satisfies the doubling property, that is,
\begin{equation}\label{doubling}
\omega(\E_{2r})\leq c(n,q)\omega(\E_r).
\end{equation}
Actually, apply \eqref{weight2} with $\E=\E_{2r}$ and $f=
\chi_{\E_r}$ that gives \eqref{doubling} with $c=[\omega]_q
2^{q(n+2)}.$ The doubling property of $\omega,$ together with
\eqref{weight2}, shows that in the definition \eqref{Aq} we could
have replaced the family of ellipsoids $\{\E_r\}_{r>0}$ by a family
of cylinders $\{\C_r\}_{r>0}$ or other such equivalent families, as
it is noted in Remark~\ref{rem1}.

A noteworthy feature of the $A_q$ classes is that these increase with
$q,$ that is, if $\omega\in A_q,$ then $\omega\in A_p$ whenever
$p\geq q$ and $[\omega]_p\leq [\omega]_q.$

Another  important characteristic  of the Muckenhoupt weights is
the {\it strong doubling property} (see  \textsc{Torchinsky}~\cite[Theorem~IX.2.1]{To} or
\textsc{Mengesha} and \textsc{Phuc}~\cite[Lemma~3.3]{MP}).  Moreover, as proved in \textsc{Stein}~\cite[Section~V.5.3]{St}, for each weight $\omega\in A_q,$
$q>1,$ there exist $\omega_1$ and $\omega_2$ in $A_1$ so that
$\omega=\omega_1\omega_2^{1-q}.$ This,
along with \textsc{Torchinsky}~\cite[Proposition~IX.4.5]{To}, gives  that $\omega$
satisfies the reverse H\"older inequality and {\it reverse doubling
property}. Unifying the both  doubling  conditions, one can observe
that for each   $\E$ and each  measurable  subset $A\subset \E,$
there exist positive constants $c_1$ and $\tau_1\in(0,1)$ such that
\begin{equation}\label{eq8}
\frac1{[\omega]_q}   \left( \frac{|A|}{|\E|}  \right)^q
\leq \frac{\omega(A)}{\omega(\E)}\leq c_1 \left( \frac{|A|}{|\E|}
\right)^{\tau_1},
\end{equation}
where $c_1$ and $\tau_1$ depend on $[\omega]_q,$ $n$ and $q,$ but are
independent of $\E$ and $A.$ Let us note that the lower bound in \eqref{eq8} is the above mentioned \textit{strong doubling property,} while the upper one is the \textit{reverse doubling property.}

Given a measurable and non-negative weight $\omega(x,t),$
the weighted Lebesgue space
$L^q_\omega(\R^{n+1}),$ $q>1,$ is the collection of all measurable functions $f$ for which
\begin{equation}\label{weight4}
\|f\|^q_{L^q_\omega(\R^{n+1})}=\int_{\R^{n+1}}
|f(x,t)|^q\omega(x,t)\,dxdt<\infty.
\end{equation}

As already mentioned above, the famous result of \textsc{Muckenhoupt}~\cite{Muck} states that $\omega\in A_q$ is a \textit{necessary and sufficient condition} ensuring that the Hardy--Littlewood maximal operator maps $L^q_\omega$ into itself (see also \textsc{Torchinsky}~\cite[Theorem~IX.4.1]{To}). Summarizing, we have
\begin{lem}\label{3.2}
Suppose $\omega(x,t)\in A_q,$ $q\in(1,\infty).$ Then there exists a
positive constant $c=c(q,n)$ such that
$$
\frac1c\| f\|_{L^q_\omega(\R^{n+1})}\leq \|\M
f\|_{L^q_\omega(\R^{n+1})}\leq c\|f\|_{L^q_\omega(\R^{n+1})}
$$
whenever $f\in L^q_\omega(\R^{n+1}).$
\end{lem}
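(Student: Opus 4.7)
The plan is to prove the two inequalities separately, since the left-hand estimate is elementary while the right-hand estimate is the classical theorem of Muckenhoupt that is being recorded here for the reader's convenience.

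For the \emph{lower bound}, I would invoke the Lebesgue differentiation theorem adapted to the family of parabolic ellipsoids $\E_r(x,t)$. The engulfing property (i) and the doubling property (ii) from the excerpt imply that $\{\E_r(x,t)\}$ is a Besicovitch-type differentiation basis on $\R^{n+1}$, so for $f\in L^1_{\mathrm{loc}}(\R^{n+1})$ one has
\begin{equation*}
|f(x,t)| = \lim_{r\to 0^+} \frac{1}{|\E_r(x,t)|}\int_{\E_r(x,t)} f(y,\tau)\,dyd\tau \le \M f(x,t)
\end{equation*}
for almost every $(x,t)$. Raising this pointwise inequality to the $q$-th power and integrating against $\omega(x,t)\,dxdt$ immediately gives $\|f\|_{L^q_\omega(\R^{n+1})}\le \|\M f\|_{L^q_\omega(\R^{n+1})}$, so one may take the constant to be $1$.

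For the \emph{upper bound}, the strategy is the classical two-step argument. First I would establish the weak-type $(q,q)$ inequality
\begin{equation*}
\omega\bigl(\{(x,t): \M f(x,t) > \lambda\}\bigr) \le \frac{c}{\lambda^q}\int_{\R^{n+1}} |f|^q\,\omega\,dxdt.
\end{equation*}
Given $\lambda>0$, cover the level set by ellipsoids $\E$ for which $\frac{1}{|\E|}\int_\E |f|\,dxdt > \lambda$, extract a disjoint Vitali subcover $\{\E_j\}$ using property (i), and apply the $A_q$-characterization \eqref{weight2} with $f$ replaced by $|f|$ on each $\E_j$:
\begin{equation*}
\lambda^q \le \Bigl(\frac{1}{|\E_j|}\int_{\E_j}|f|\,dxdt\Bigr)^q \le \frac{[\omega]_q}{\omega(\E_j)}\int_{\E_j}|f|^q \omega\,dxdt.
\end{equation*}
Summing over $j$ and converting $\omega(\E_j)$ to $\omega(c_1\E_j)$ by the doubling property \eqref{doubling} yields the weak-type bound. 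Then I would upgrade this to a strong-type bound via Marcinkiewicz interpolation between exponents $q-\varepsilon$ and $\infty$, using the trivial boundedness of $\M$ on $L^\infty$.

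The main obstacle—and the genuinely nontrivial ingredient—is the \emph{self-improvement} of the $A_q$ class: if $\omega\in A_q$ then $\omega\in A_{q-\varepsilon}$ for some $\varepsilon=\varepsilon([\omega]_q,n)>0$. This is precisely what makes the interpolation legal, and it rests on the reverse Hölder inequality for Muckenhoupt weights, which in turn is equivalent to the reverse doubling property recorded as the upper bound in \eqref{eq8}. Once this self-improvement is available, the interpolation step and the combination with the weak-type estimate are routine, and the proof reduces essentially to citing \textsc{Muckenhoupt}~\cite{Muck} and \textsc{Torchinsky}~\cite[Theorem~IX.4.1]{To}, which is presumably what the authors will do.
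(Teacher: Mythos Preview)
Your proposal is correct and, in fact, goes considerably further than the paper does: the authors give no proof whatsoever for this lemma, treating it purely as a recorded fact and citing \textsc{Muckenhoupt}~\cite{Muck} and \textsc{Torchinsky}~\cite[Theorem~IX.4.1]{To} in the sentence immediately preceding the statement. You anticipated this exactly in your final paragraph. Your sketch of the argument (pointwise domination for the lower bound, weak-type $(q,q)$ via the $A_q$ characterization \eqref{weight2} plus Vitali, then self-improvement and Marcinkiewicz interpolation for the strong bound) is the standard proof and would be a perfectly acceptable expansion if one were asked to supply details.
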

\begin{example}
\em
The weight function $ \omega(x,t)=\rho(x,t)^\alpha $ belongs to $ A_q$ with $q\in(1,\infty)$ if and only if $-(n+2)<\alpha < (q-1)(n+2).$
\end{example}

\section{Assumptions and main result}\label{sec3}

For each cylinder $\C_r(y,\tau)=\C_r(y_1,y',\tau)$ and for a fixed
$x_1 \in (y_1-r, y_1+r)$ we set $\C_r^{x_1}(y,\tau)$ to denote
the $x_1$-slice of $\C_r(y,\tau),$ that is,
$$
\C_r^{x_1}(y,\tau)=\{(x',t) \in \R^{n-1} \times \R: (x,t)=(x_1, x',t) \in
\C_r(y,\tau)\}.
$$
Then we define the integral average
$$
\ol \ba_{\C_r^{x_1}(y,\tau)}(x_1)=\frac1{|\C_r^{x_1}(y,\tau)|}
\int_{\C_r^{x_1}(y,\tau)} \ba(x_1,x',t) \, dx'dt.
$$
\begin{defn}\label{def1}
We say that the couple $(\ba, \Omega)$ is $(\delta,R)$-vanishing of codimension $1,$ if the following properties are satisfied:

$\bullet$ For every point $(y,\tau)\in Q$ and for every number $r\in
(0,\frac13 R]$  with
\begin{equation}\label{eq12a}
\dist  (y,\partial \Omega)> \sqrt2 r,
\end{equation}
there exists a coordinate system depending on $(y,\tau)$ and $r,$ whose variables we still denote
by $(x,t)$ so that in this new coordinate system $(y,\tau)$ is the origin and
\begin{equation}\label{eq12}
\frac1{|\C_{r}(0,0)|}\int_{\C_{r}(0,0)}|\ba(x,t)-\ol\ba_{
\C_r^{x_1}(0,0)}(x_1)|^2 \ dxdt\leq \delta^2.
\end{equation}

$\bullet$ For any point $(y,\tau)\in Q$  and for every
number $r\in (0,\frac13 R]$  such that
$$
\dist  (y,\partial \Omega)= \dist (y,x_0)\leq  \sqrt2 r
$$
for some $x_0\in \partial \Omega,$ there exists a coordinate system
depending on $(y,\tau)$ and $r,$ whose variables we still denote by
$(x,t)$ such that in this new coordinate system $(x_0,\tau)$ is the
origin,
\begin{equation}\label{eq13}
\Omega \cap \{ x\in \C'_{3r}(0): x_1> 3r\delta  \}\subset \Omega\cap
\C'_{3r}(0)\subset \Omega \cap \{x\in \C'_{3r}(0): x_1>-3r\delta \}
\end{equation}
and
$$
\frac1{|\C_{3r}(0,0)|}\int_{\C_{3r}(0,0)}|\ba(x,t)
-\ol\ba_{\C_{3r}^{x_1}(0,0)}(x_1)|^2\, dxdt\leq \delta^2.
$$
\end{defn}

We add some comments regarding the above definition. Thanks to the
scaling invariance property, one can take for simplicity $R=1$ or
any other constant bigger than $1.$ On the other hand $\delta$ is a
small positive constant, being invariant under such a scaling
argument. If $\ba$ is $(\delta,R)$-vanishing  of codimension $1,$
then for each point and for each sufficiently small scale, there is
a coordinate system so that the coefficients have small oscillation
in $(x',t)$-variables while these are only measurable
in the $x_1$-variable and therefore may have arbitrary jumps with respect to it. In addition, the boundary of the domain is
$(\delta,R)$-Reifenberg flat (see \textsc{Reifenberg}~\cite{R})  and the coefficients
have a small oscillation along the flat direction $x'$ of the boundary
and are only measurable along the normal direction $x_1.$  The number $\sqrt2 r$ in \eqref{eq12a} is selected for
convenience.  It comes from the reason that we need to take an
enough size of the cylinders in \eqref{eq12}  so that there is a room
to have the rotation of $\C_r(y,\tau)$ in any spatial direction.

We suppose that  the  right-hand side of the equation in  \eqref{DP}
belongs to some weighted Lebesgue space, precisely
$$
|\bF|^2\in L^{\frac{p}{2}}_\omega(Q), \quad \omega\in
A_{\frac{p}{2}}, \quad p\in(2,\infty),
$$
which implies
$$
\bF \in L^{p}_\omega(Q), \quad \omega\in A_{\frac{p}{2}} \subset
A_{p}, \quad p\in(2,\infty).
$$
Then we get from H\"{o}lder's inequality, (\ref{Aq}),
(\ref{weight3}) and (\ref{weight4}) that
\begin{align*}
\|\bF\|^2_{L^2(Q)}& = \int_Q |\bF(x,t)|^2 \omega^{\frac{2}{p}}(x,t) \omega^{-\frac{2}{p}}(x,t)\, dxdt\\
& \leq \left( \int_Q(|\bF(x,t)|^2)^{\frac{p}{2}}\omega(x,t)\,  dxdt  \right)^{\frac{2}{p}} \left(  \int_Q \omega(x,t)^{-\frac{2}{p-2}}\,  dxdt  \right)^{\frac{p-2}{p}}\\
& =
\| |\bF|^2\|_{L^{\frac{p}{2}}_\omega(Q)} |Q|^{\frac{p-2}{p}} \left( \frac1{|Q|}\int_Q\omega(x,t)^{-\frac{2}{p-2}}\, dxdt   \right)^{\frac{p-2}{p}}\\
&  \leq  \| |\bF|^2\|_{L^{\frac{p}{2}}_\omega(Q)} |Q|
\omega(Q)^{-\frac{2}{p}} [\omega]_{\frac{p}{2}}^{\frac{2}{p}}.
\end{align*}
Hence we have
$$
\frac1{c} \|\bF\|^2_{L^2(Q)}\leq \|
|\bF|^2\|_{L_w^{\frac{p}{2}}(Q)}<\infty
$$
with a constant $c=|Q|  \omega(Q)^{-\frac{2}{p}}
[\omega]_{\frac{p}{2}}^{\frac{2}{p}}, $ which ensures the existence
of a unique weak solution $u$ of the equation \eqref{DP} (cf. \textsc{Byun}~\cite{B}, \textsc{Byun} and \textsc{Wang}~\cite{BW}).

We now state the main result of the paper.

\begin{thm}\label{MainTh}
Let $p\in(2,\infty),$ $\omega\in A_{\frac{p}{2}}$ and assume \eqref{eq3}.  Then there
exists a small positive constant $\delta=\delta(n,L,\nu,p,\omega,Q)$
such that if the couple $(\ba,\Omega)$ is $(\delta,R)$-vanishing of codimension $1$ and
 $\bF\in L^p_\omega(Q),$ then the spatial gradient $Du$ of the weak solution $u$ of \eqref{DP} belongs to $L^p_\omega(Q)$ and the following estimate holds
$$
\|Du\|_{L^p_\omega(Q)}\leq c \|\bF\|_{L^p_\omega(Q)}
$$
with a constant $c$ depending on $n,$ $L,$ $\nu,$ $p,$ $\omega$ and $Q.$
\end{thm}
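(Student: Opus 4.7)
The plan is to deploy a parabolic Calder\'on--Zygmund scheme in the weighted setting, converting the desired bound into a control on the super-level sets of $\M(|Du|^2)$ measured with $d\omega=\omega(x,t)\,dxdt,$ and then recovering the $L^p_\omega$ estimate through Lemma~\ref{3.2}. Writing $E_\mu=\{(x,t)\in Q:\M(|Du|^2)>\mu\}$ and $F_\mu=\{(x,t)\in Q:\M(|\bF|^2)>\mu\},$ I would aim at a good-$\lambda$ inequality of the form
$$
\omega(E_{N\lambda})\leq c\,\varepsilon^{\tau_1}\,\omega(E_\lambda)+\omega(F_{\eta\lambda}),\qquad \lambda\geq\lambda_0,
$$
with $N$ large, $\eta=\eta(N)$ small, and $\varepsilon$ controllable through $\delta;$ here $\tau_1$ is the reverse-doubling exponent in \eqref{eq8}. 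Once this inequality is established, multiplying by $\lambda^{p/2-1},$ integrating over $\lambda\in(0,\infty)$ via layer-cake and applying Lemma~\ref{3.2} with exponent $p/2$ and weight $\omega\in A_{p/2}$ delivers $\||Du|^2\|_{L^{p/2}_\omega(Q)}\leq c\,\||\bF|^2\|_{L^{p/2}_\omega(Q)},$ which is the claimed estimate.

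The backbone of the argument is a pair of interior and boundary comparison estimates in cylinders $\C_r(y,\tau).$ For an interior cylinder, under the condition $\dist(y,\partial\Omega)>\sqrt2 r,$ I would first compare $u$ with the weak solution $v$ of the homogeneous problem carrying the same coefficients, and then compare $v$ with the weak solution $w$ of the equation whose coefficients are replaced by the $(x',t)$-average $\ol\ba_{\C_r^{x_1}}(x_1),$ i.e.\ depending only on the distinguished spatial direction $x_1.$ Each step is controlled, via Cacciop\-poli and the smallness hypothesis \eqref{eq12}, by $\delta^2$ times the local energy, while the frozen problem for $w$ is a linear laminate admitting an $L^\infty$-gradient estimate (\textsc{Dong}~\cite{Dong}, \textsc{Dong} and \textsc{Kim}~\cite{DongKim}). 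At the boundary I would use the Reifenberg flatness \eqref{eq13} to flatten $\partial\Omega$ into the hyperplane $\{x_1=0\},$ then run the same two-step comparison against a laminate problem on a half-cylinder with zero Dirichlet data on the flat face, invoking the corresponding boundary Lipschitz bound for $Dw.$

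The comparison estimates translate into the density statement: for every $\varepsilon\in(0,1)$ there exist $N=N(\varepsilon,n,L,\nu)>1$ and $\delta=\delta(\varepsilon,n,L,\nu)>0$ such that if $\C_r\cap(Q\setminus E_\lambda)\cap(Q\setminus F_{\eta\lambda})\neq\emptyset$ then $|E_{N\lambda}\cap\C_r|\leq\varepsilon|\C_r|.$ A Vitali-type covering of $E_{N\lambda},$ whose engulfing property is furnished by (i) in Section~\ref{sec2}, upgrades this to the Lebesgue-measure inequality $|E_{N\lambda}|\leq c\varepsilon(|E_\lambda|+|F_{\eta\lambda}|).$ The Muckenhoupt hypothesis now enters decisively: for every selected cylinder $\C_r$ and every $A\subset\C_r$ the reverse-doubling half of \eqref{eq8} gives $\omega(A)/\omega(\C_r)\leq c_1(|A|/|\C_r|)^{\tau_1},$ so summing over the Vitali family yields the same inequality with $\omega$ replacing the Lebesgue measure and with $c\varepsilon^{\tau_1}$ in place of $c\varepsilon.$ Fixing $\varepsilon$ so small that $c\varepsilon^{\tau_1}<\tfrac12$ produces the good-$\lambda$ inequality above.

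The main obstacle I anticipate is the \emph{boundary} comparison: one must produce a Lipschitz-type estimate for $Dw$ on a half-cylinder, for the laminate problem whose coefficients depend only on the direction transverse to the flattened boundary, with constants independent of the Reifenberg flatness parameter once $\delta$ is sufficiently small. The standard reflection / localization that works for elliptic equations has to be adapted to the parabolic geometry of the ellipsoids $\E_r$ and cylinders $\C_r,$ and the quantitative interplay between $\delta,$ the flatness, and the comparison estimate has to be tracked carefully. A secondary, more bookkeeping difficulty is to transfer the good-$\lambda$ inequality from Lebesgue to $\omega$-measure via the reverse-doubling exponent $\tau_1:$ the choice of $\varepsilon$ (and hence of $\delta$) depends on $[\omega]_{p/2},$ reflecting the necessary dependence on $\omega$ announced in the statement.
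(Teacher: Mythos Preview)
Your proposal is correct and follows the same Calder\'on--Zygmund/maximal-function level-set framework as the paper, but the packaging differs in two ways worth noting. First, the paper does \emph{not} re-derive the interior and boundary comparison estimates you outline: the key density statement (your ``if $\C_r$ contains a good point then $|E_{N\lambda}\cap\C_r|\leq\varepsilon|\C_r|$'') is imported wholesale as Lemma~\ref{3.4} from \cite{DongKim,BPW,BW2}, with the Reifenberg-flat boundary Lipschitz estimate for the laminate already absorbed into that black box; so the ``main obstacle'' you anticipate is handled by citation rather than by argument. Second, instead of a continuous good-$\lambda$ inequality plus layer-cake, the paper runs a \emph{discrete} iteration on the geometric scale $\lambda_1^{2k}$ (Lemma~\ref{3.6}) and sums via the dyadic characterization of $L^{p/2}_\omega$ in Lemma~\ref{3.7}; the weighted Vitali lemma (Lemma~\ref{3.3}) is stated directly for $\omega$-measure, using \emph{both} halves of \eqref{eq8} (strong doubling to control $\omega(\C_{5r})/\omega(\C_r)$ and $\omega(\C_r)/\omega(Q\cap\C_r)$, reverse doubling to pass from $\omega$-density to Lebesgue density in Lemma~\ref{3.5}), whereas your sketch invokes only the reverse-doubling half. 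The discrete route has the technical advantage that finiteness of $\SSS$ is established by exhibiting a convergent geometric series, sidestepping the a~priori finiteness issue that your absorption step $\|\M(|Du|^2)\|_{L^{p/2}_\omega}<\infty$ would otherwise require (typically handled by truncation). Both routes arrive at the same estimate with the same dependence of $\delta$ on $[\omega]_{p/2}.$
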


The present work is a natural extension of the previous paper \textsc{Byun, Palagachev} and \textsc{Wang}~\cite{BPW} which deals with the regularity problem for parabolic equations in classical (unweighted, $\omega(x,t)\equiv1$) Lebesgue classes.

Here with a natural parabolic Muckenhoupt weight for the problem \eqref{DP}, we first find a correct version for the weight of the Vitali covering lemma, and verify the hypotheses of this covering lemma from the perturbation results for the unweighted case and comparable relationships between the Lebesgue and the weighted measures. We then apply the covering lemma to derive a weighted power decay estimate of the upper level sets for the Hardy--Littlewood maximal function of the spatial gradient of the weak solution. The required estimate in the main result follows then by the standard procedure of summation over the level sets.

\section{Gradient estimates in $L^p_\omega$}\label{sec4}

Because of the scaling invariance property of the Reifenberg flat domains
(cf. \textsc{Byun, Palagachev} and \textsc{Wang}~\cite[Lemma~5.2]{BPW} for instance), we can take $R=1$ hereafter.
\begin{lem}\label{3.3}
Suppose $\Omega$ is a bounded $(\delta,1)$-Reifenberg flat domain (that is, \eqref{eq13} is verified) and $\omega(x,t)\in A_q,$ $q\in(1,\infty).$
Let  ${\CC}\subset  \DD\subset Q$ be measurable subsets of $Q$
satisfying the following conditions:
\begin{itemize}
\item[$\bullet$] there exists
$\varepsilon\in(0,1)$ such that for each $(y,\tau)\in Q$
\begin{equation}\label{eq3.3}
\omega(\CC\cap \C_1(y,\tau))< \epsilon \omega(\C_1(y,\tau));
\end{equation}
\item[$\bullet$] for each $(y,\tau)\in Q$ and $r>0$
\begin{equation}\label{eq3.4}
\omega(\CC\cap \C_r(y,\tau))\geq \varepsilon
\omega(\C_r(y,\tau)) \quad   \text{implies} \quad Q\cap \C_r(y,\tau)\subset \DD.
\end{equation}
\end{itemize}
Then
\begin{equation}\label{eq3.3a}
 \omega(\CC) \leq \varepsilon [\omega]_q^2 \left( \frac{10\sqrt2}{1-\delta} \right)^{q(n+2)}
 \omega(\DD).
\end{equation}
\end{lem}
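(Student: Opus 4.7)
The plan is a weighted Calder\'on--Zygmund stopping-time argument, parallel to the unweighted Vitali lemma but with $d\mu=\omega\,dxdt$ in place of Lebesgue measure. First, for each $(y,\tau)\in\CC$ I would define the stopping radius
$$
r_{y,\tau}:=\sup\bigl\{r>0:\omega(\CC\cap\C_r(y,\tau))\geq\varepsilon\,\omega(\C_r(y,\tau))\bigr\}.
$$
Hypothesis \eqref{eq3.3} forces $r_{y,\tau}<1$, while the doubling property \eqref{doubling} and the resulting weighted Lebesgue differentiation theorem force $r_{y,\tau}>0$ at $\omega$-almost every $(y,\tau)\in\CC$. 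Continuity in $r$ from property (iii) of Section~\ref{sec2} attains the supremum, so $\omega(\CC\cap\C_{r_{y,\tau}}(y,\tau))=\varepsilon\,\omega(\C_{r_{y,\tau}}(y,\tau))$; hypothesis \eqref{eq3.4} then yields $Q\cap\C_{r_{y,\tau}}(y,\tau)\subset\DD$, and by maximality $\omega(\CC\cap\C_r(y,\tau))<\varepsilon\,\omega(\C_r(y,\tau))$ for every $r>r_{y,\tau}$.

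Next, the engulfing property (i) lets me extract from the family $\{\C_{r_{y,\tau}}(y,\tau)\}$ a countable disjoint subfamily $\{\C_{r_i}(y_i,\tau_i)\}$ whose $c_1$-dilates cover $\CC$ up to an $\omega$-null set. Combining the maximality from Step~1 with two applications of the strong doubling bound in \eqref{eq8} yields
\begin{align*}
\omega(\CC)
&\leq\sum_i\omega\bigl(\CC\cap\C_{c_1 r_i}(y_i,\tau_i)\bigr)
\leq\varepsilon\sum_i\omega\bigl(\C_{c_1 r_i}(y_i,\tau_i)\bigr)\\
&\leq\varepsilon\,[\omega]_q\,c_1^{q(n+2)}\sum_i\omega\bigl(\C_{r_i}(y_i,\tau_i)\bigr)
\leq\varepsilon\,[\omega]_q^2\,\Bigl(\tfrac{C}{1-\delta}\Bigr)^{q(n+2)}\omega(\DD).
\end{align*}
The final step rests on a cylinder-by-cylinder comparison of $\omega(\C_{r_i})$ with $\omega(Q\cap\C_{r_i})$: interior cylinders ($\dist(y_i,\partial\Omega)>\sqrt2\,r_i$) lie in $Q$ trivially, while boundary cylinders are handled by the Reifenberg inclusion \eqref{eq13}, which through $\Omega\cap\C'_{3r_i}(0)\supset\{x_1>3r_i\delta\}\cap\C'_{3r_i}(0)$ gives a uniform lower bound $|Q\cap\C_{r_i}|\gtrsim(1-\delta)|\C_{r_i}|$. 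Applying \eqref{eq8} once more converts this into an $\omega$-measure comparison, and disjointness together with $Q\cap\C_{r_i}\subset\DD$ from Step~1 yields $\sum_i\omega(Q\cap\C_{r_i})\leq\omega(\DD)$.

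The principal obstacle is the geometric Lebesgue-measure estimate on boundary cylinders: one must exploit Definition~\ref{def1}'s coordinate alignment and the half-space approximation of $\Omega$ to obtain $|Q\cap\C_{r_i}|\gtrsim(1-\delta)|\C_{r_i}|$ uniformly in $i$. Everything else is a bookkeeping combination of the engulfing constant $c_1$ and the $\sqrt 2$ from \eqref{eq12a}, which together absorb into the explicit factor $(10\sqrt 2/(1-\delta))^{q(n+2)}$.
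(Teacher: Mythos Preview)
Your proposal is correct and follows essentially the same route as the paper: stopping-time selection of radii where $\Theta(r)=\varepsilon$, Vitali extraction of a disjoint subfamily, two applications of the strong doubling inequality \eqref{eq8}, and the Reifenberg measure-density comparison to pass from $\omega(\C_{r_i})$ to $\omega(Q\cap\C_{r_i})$. The only cosmetic differences are that the paper uses the explicit Vitali constant $5$ in place of your abstract engulfing constant $c_1$, and quotes the bound $|\C_r|/|Q\cap\C_r|\leq(2\sqrt2/(1-\delta))^{n+2}$ directly from \cite{BPW,BW2} rather than sketching its derivation from \eqref{eq13} as you do.
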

\begin{proof}
Fix $(y,\tau)\in Q$ and for each $r>0$ define the function
$$
\Theta(r)=\frac{\omega(\CC\cap \C_r(y,\tau))}{\omega(\C_r(y,\tau))}.
$$
We have $\Theta\in C^0(0,\infty),$
$\Theta(1)<\varepsilon$ according
to \eqref{eq3.3} and $ \Theta(0)=\lim_{r\to 0_+}\Theta(r)=1$ by the
Lebesgue Differentiation Theorem. Therefore, for almost all
$(y,\tau)\in \CC,$ there exists $r_{(y,\tau)}\in(0,1)$ such that
$\Theta(r_{(y,\tau)})=\varepsilon$ and $\Theta(r)<\varepsilon$ for
all $r>r_{(y,\tau)}. $

Define the family of cylinders $\{\C_{r_{(y,\tau)}}(y,\tau)\}_{(y,\tau)\in \CC}$ which forms an open covering of $\CC.$ By the Vitali lemma
(cf. \textsc{Stein}~\cite[Lemma~I.3.1]{St}), there exists a disjoint sub-collection
  $\{\C_{r_i}(y_i,\tau_i)\}_{i\geq 1} $ with $r_i=r_{(y_i,\tau_i)}\in(0,1),$ $(y_i,\tau_i)\in \CC$ such that $\Theta(r_i)=\varepsilon$ and
$$
\sum_{i\geq 1} |\C_{r_i}(y_i,\tau_i)|\geq c |\CC|,\quad
\CC\subset \ds\bigcup_{i\geq 1}\C_{5r_i}(y_i,\tau_i),\quad
\CC = \bigcup_{i\geq 1}\big(\CC\cap
\C_{5r_i}(y_i,\tau_i)\big)
$$
for some $c=c(n)>0.$

Since $\Theta(5r_i)<\varepsilon,$ we have  by \eqref{eq8}
\begin{align*}
&\omega(\CC\cap \C_{5r_i}(y_i,\tau_i))<\varepsilon \omega(\C_{5r_i}(y_i,\tau_i))\\
&\leq \varepsilon [\omega]_q \left(  \frac{|\C_{5r_i}(y_i,\tau_i)|}{|\C_{r_i}(y_i,\tau_i)|} \right)^q
\omega(\C_{r_i}(y_i,\tau_i))\\
&=\varepsilon[\omega]_q5^{q(n+2)}\omega(\C_{r_i}(y_i,\tau_i)).
\end{align*}

In order to employ \eqref{eq3.4}, we have to estimate the ratio
$\frac{\omega(\C_{r_i}(y_i,\tau_i))}{\omega(Q\cap \C_{r_i}(y_i,t_i))}.$ For, making use of the bound
$$
\sup_{0<r<1}\sup_{(y,\tau)\in Q}\frac{|\C_r(y,\tau)|}{|Q\cap \C_r(y,\tau)|}\leq \left(
\frac{2\sqrt 2}{1-\delta} \right)^{n+2},
$$
obtained in \textsc{Byun, Palagachev} and \textsc{Wang}~\cite{BPW} or \textsc{Byun} and \textsc{Wang}~\cite{BW2}, and the doubling condition
\eqref{eq8}, we get
$$
\omega(\C_{r_i} (y_i,\tau_i)  )\leq [\omega]_q \left(
\frac{2\sqrt 2}{1-\delta} \right)^{q(n+2)} \omega(Q\cap \C_{r_i}(y_i,\tau_i)).
$$
Now we have
\begin{align*}
\omega(\CC)& \leq \omega\big(\bigcup_{i\geq 1}\big(\CC\cap \C_{5r_i}(y_i,\tau_i)\big)\big)\\
&\leq \sum_{i\geq 1}\omega(\CC\cap\C_{5r_i}(y_i,\tau_i)) < \varepsilon \sum_{i\geq 1}\omega(\C_{5r_i}(y_i,\tau_i) )\\
&\leq \varepsilon
[\omega]_q5^{q(n+2)}\sum_{i\geq 1} \omega( \C_{r_i}(y_i,\tau_i))\\
&\leq \varepsilon
[\omega]^2_q \left( \frac{10\sqrt2}{1-\delta}  \right)^{q(n+2)} \sum_{i\geq 1}
\omega(Q\cap\C_{r_i}(y_i,\tau_i)).
\end{align*}
Having in mind that $\{\C_{r_i}(y_i,\tau_i)\}$ are mutually
disjoint, $\Theta(r_i)=\varepsilon$ and \eqref{eq3.4},  we get
\begin{align*}
\omega(\CC)&\leq \varepsilon [\omega]_q^2\left( \frac{10\sqrt2}{1-\delta}  \right)^{q(n+2)}
\omega\Big(\bigcup_{i\geq 1}Q\cap \C_{r_i}(y_i,\tau_i)\Big)\\
&\leq \varepsilon[\omega]_q^2 \left( \frac{10\sqrt2}{1-\delta}  \right)^{q(n+2)} \omega(\DD).
\end{align*}
\end{proof}

In the following we recall an approximation lemma obtained for the
unweighted spaces in \textsc{Dong} and \textsc{Kim}~\cite[Corollary~8.4]{DongKim}, \textsc{Byun, Palagachev} and \textsc{Wang}~\cite[Lemma~5.3]{BPW} and \textsc{Byun} and \textsc{Wang}~\cite[Lemma~5.5]{BW2}.
\begin{lem}\label{3.4}
Assume \eqref{eq3} and let $u$ be a weak solution of \eqref{DP}. Then there is a constant $\lambda_1=\lambda_1(\nu, n)>1$
such that for each $\varepsilon\in(0,1)$ there exists $\delta=\delta(\varepsilon)>0$ such that if $\ba$ is $(\delta,1)$-vanishing of codimension $1$ and if $\C_r(y,\tau)$ satisfies
$$
|\{(x,t)\in Q\colon  \M(|Du|^2)>\lambda_1^2  \}\cap \C_r(y,\tau) |\geq
\varepsilon |\C_r(y,\tau)|,
$$
then we have
$$
Q\cap \C_r(y,\tau)\subset \{(x,t)\in Q\colon  \M(|Du|^2)>1  \}\cup \{ \M(|\bF|^2)>\delta^2 \}.
$$
\end{lem}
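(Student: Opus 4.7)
The plan is to argue by contrapositive. Fix the cylinder $\C_r(y,\tau)$ and suppose there exists at least one point $(x_0,t_0)\in Q\cap \C_r(y,\tau)$ at which both
$$
\M(|Du|^2)(x_0,t_0)\le 1 \quad\text{and}\quad \M(|\bF|^2)(x_0,t_0)\le \delta^2
$$
hold. Under this assumption the goal is to show that, if $\lambda_1=\lambda_1(n,\nu,L)>1$ is chosen large enough and $\delta=\delta(\varepsilon)>0$ small enough, the upper level set $\{\M(|Du|^2)>\lambda_1^2\}$ meets $\C_r(y,\tau)$ in a set of measure strictly less than $\varepsilon|\C_r(y,\tau)|$. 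Since any cylinder $\C_{Kr}(y,\tau)$ is contained in some $\C_{K'r}(x_0,t_0)$ with $K'$ dimensional, the two maximal-function hypotheses immediately give the averaged bounds
$$
\frac{1}{|\C_{Kr}(y,\tau)|}\int_{Q\cap \C_{Kr}(y,\tau)}|Du|^2\,dxdt\le c,\qquad \frac{1}{|\C_{Kr}(y,\tau)|}\int_{Q\cap \C_{Kr}(y,\tau)}|\bF|^2\,dxdt\le c\delta^2,
$$
with $c=c(n)$.

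The central step is a comparison/approximation argument, split into the interior case $\dist(y,\partial\Omega)>\sqrt{2}r$ and the boundary case, as dictated by Definition~\ref{def1}. In each case, passing to the adapted coordinates in which $\ba$ is $L^2$-close to a matrix $\ol{\ba}_{\C_r^{x_1}}(x_1)$ depending only on $x_1$ (and in the boundary case flattening $\partial\Omega$ using \eqref{eq13}), one constructs a reference weak solution $h$ of
$$
h_t-D_\alpha\bigl(\ol{a}^{\alpha\beta}(x_1)D_\beta h\bigr)=0
$$
on a suitable sub-cylinder, sharing the boundary/initial trace of $u$ and carrying zero lateral Dirichlet data in the boundary case. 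Standard Caccioppoli/energy estimates together with the smallness in \eqref{eq12} and the smallness of $|\bF|^2$ yield a comparison bound
$$
\frac{1}{|\C_r(y,\tau)|}\int_{Q\cap \C_r(y,\tau)}|D(u-h)|^2\,dxdt\le \eta(\delta),
$$
with $\eta(\delta)\to 0^+$ as $\delta\to 0^+$. The crucial input, available precisely because the reference equation has coefficients depending only on $x_1$, is the a priori bound $\|Dh\|_{L^\infty(\C_{r/2})}^2\le N_0$ with $N_0=N_0(n,\nu,L)$ coming from \textsc{Dong} and \textsc{Kim}~\cite{DongKim} and its boundary analogue in \cite{BPW,BW2}.

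Finally, setting $\lambda_1^2:=\max\{4N_0,\,4c\}+1$, one decomposes, for $(x,t)\in \C_r(y,\tau)$,
$$
\M(|Du|^2)(x,t)\le 2\,\M\bigl(|D(u-h)|^2\chi_{\C_{r/2}}\bigr)(x,t)+2\,\M\bigl(|Dh|^2\chi_{\C_{r/2}}\bigr)(x,t)+c,
$$
where the additive $c$ absorbs the contribution of maximal radii of order $r$ via Step~1. Because the second term never exceeds $2N_0$, the bad set forces $\M(|D(u-h)|^2\chi_{\C_{r/2}})(x,t)>1$, and the weak-$(1,1)$ bound for $\M$ gives
$$
|\{\M(|Du|^2)>\lambda_1^2\}\cap \C_r(y,\tau)|\le c\int |D(u-h)|^2\,dxdt\le c'\eta(\delta)|\C_r(y,\tau)|.
$$
Choosing $\delta$ so that $c'\eta(\delta)<\varepsilon$ closes the argument. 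The main obstacle is the boundary comparison estimate: one must simultaneously straighten $\partial\Omega$ up to error $\delta$ by \eqref{eq13}, freeze $\ba$ in the $(x',t)$-variables while preserving only measurable dependence on $x_1$, and quantify all energy estimates with explicit $\delta$-dependence — this is precisely what is carried out in \cite{DongKim,BPW,BW2}, whose building blocks are reassembled here.
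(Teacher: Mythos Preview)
The paper does not prove Lemma~\ref{3.4} at all; it merely \emph{recalls} the statement from the unweighted literature, citing \cite[Corollary~8.4]{DongKim}, \cite[Lemma~5.3]{BPW} and \cite[Lemma~5.5]{BW2}. Your proposal is therefore not being compared against a proof in this paper but against the arguments in those references, and the outline you give is precisely the standard one carried out there: contrapositive reduction to a single good point, averaged control of $|Du|^2$ and $|\bF|^2$ on all scales $\gtrsim r$ at $(y,\tau)$, comparison with a reference solution $h$ of the frozen-in-$(x',t)$ equation, the Lipschitz bound $\|Dh\|_{L^\infty}\le N_0(n,\nu,L)$ which is the genuinely nontrivial input for coefficients measurable in $x_1$, and the weak-$(1,1)$ estimate for $\M$ applied to $|D(u-h)|^2$.

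Two small remarks. First, in your splitting of $\M(|Du|^2)$ you should distinguish scales $\le r/2$ (handled by $u-h$ and $h$ on $\C_{r/2}$) from scales $>r/2$ (handled directly by the good point $(x_0,t_0)$); the way you wrote it, with a bare additive ``$+c$'' absorbing the large-scale part, is correct in spirit but would benefit from the explicit observation that for $\rho>r/2$ one can replace the averaging cylinder by one containing $(x_0,t_0)$ at the cost of a dimensional constant. Second, you write $\lambda_1=\lambda_1(n,\nu,L)$, which is the honest dependence since $N_0$ depends on $L$; the paper's statement $\lambda_1=\lambda_1(\nu,n)$ is slightly imprecise on this point.
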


We need now to establish  a weighted version of the  lemma cited above.   For this goal, for any weak solution $u$ of \eqref{DP} we set
\begin{equation}\label{eq3.9}
\CC=\{(x,t)\in Q\colon   \M(|Du|^2)>\lambda_1^2  \}
\end{equation}
and
\begin{equation}\label{eq3.10}
\DD=\{(x,t)\in Q\colon   \M(|Du|^2)>1  \} \cup \{ \M(|\bF|^2)>\delta^2  \}
\end{equation}
with $\lambda_1$ and $\delta$ as in Lemma~\ref{3.4}. The next assertion shows that the assumption \eqref{eq3.4} holds for the such defined sets $\CC$ and $\DD.$
\begin{lem}\label{3.5}
Let $\omega\in A_q, $ $q\in (1,\infty).$ Assume that  $\ba$ is $(\delta,1)$-vanishing of
 codimension 1 and for each $r>0$  and  almost all $(y,\tau)\in Q,$   $\C_r(y,\tau)$ satisfies
$$
\Theta(r)=\frac{\omega(\CC\cap \C_r(y,\tau))}{ \omega(\C_r(y,\tau))}\geq \varepsilon.
$$
 Then we have $Q\cap\C_r(y,\tau)\subset \DD.$
\end{lem}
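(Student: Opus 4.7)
The plan is to argue by contrapositive and convert the weighted hypothesis $\Theta(r) \geq \varepsilon$ into an unweighted (Lebesgue-measure) density lower bound on $\CC \cap \C_r(y,\tau)$, at which point Lemma~\ref{3.4} applies directly. The bridge between the weighted and unweighted densities is provided by the reverse doubling inequality, i.e.\ the upper bound in \eqref{eq8}.

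More concretely, suppose toward contradiction that there exists a point $(x_0,t_0) \in Q \cap \C_r(y,\tau)$ with $(x_0,t_0) \notin \DD$. Then
$$
\M(|Du|^2)(x_0,t_0) \leq 1 \quad \text{and} \quad \M(|\bF|^2)(x_0,t_0) \leq \delta^2.
$$
By the contrapositive of Lemma~\ref{3.4}, this forces
$$
|\CC \cap \C_r(y,\tau)| < \tilde\varepsilon\, |\C_r(y,\tau)|,
$$
where $\tilde\varepsilon \in (0,1)$ is the parameter tied to the threshold $\delta(\tilde\varepsilon)$ appearing in Lemma~\ref{3.4}.

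Now I apply the reverse doubling estimate in \eqref{eq8} with $\E=\C_r(y,\tau)$ and $A=\CC \cap \C_r(y,\tau)$. Since $\omega \in A_q$, there exist $c_1>0$ and $\tau_1 \in (0,1)$, depending only on $n$, $q$, and $[\omega]_q$, such that
$$
\Theta(r) = \frac{\omega(\CC \cap \C_r(y,\tau))}{\omega(\C_r(y,\tau))} \leq c_1 \left( \frac{|\CC \cap \C_r(y,\tau)|}{|\C_r(y,\tau)|} \right)^{\tau_1} < c_1\, \tilde\varepsilon^{\tau_1}.
$$
Given the target $\varepsilon$ in the statement of Lemma~\ref{3.5}, I first select $\tilde\varepsilon$ so that $c_1\, \tilde\varepsilon^{\tau_1} < \varepsilon$, i.e.\ $\tilde\varepsilon < (\varepsilon/c_1)^{1/\tau_1}$, and then choose $\delta = \delta(\tilde\varepsilon)$ as dictated by Lemma~\ref{3.4}. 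With this choice of $\delta$ (implicit in the $(\delta,1)$-vanishing hypothesis on $\ba$), the displayed inequality contradicts the standing assumption $\Theta(r) \geq \varepsilon$, so no such $(x_0,t_0)$ can exist and $Q \cap \C_r(y,\tau) \subset \DD$.

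The only genuinely delicate step is the calibration of the two smallness parameters: one must pass from the $A_q$-reverse doubling constants $(c_1,\tau_1)$ to an appropriate $\tilde\varepsilon$, and only then invoke Lemma~\ref{3.4} to fix $\delta$. The contradiction argument itself is routine once this ordering of quantifiers is made explicit, and both the reverse doubling inequality \eqref{eq8} and the unweighted geometric lemma (Lemma~\ref{3.4}) are cited verbatim from the preceding text.
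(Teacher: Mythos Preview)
Your proof is correct and is essentially the contrapositive of the paper's argument: the paper proceeds directly, applying the reverse doubling bound in \eqref{eq8} to the hypothesis $\Theta(r)\geq\varepsilon$ to obtain $|\CC\cap\C_r(y,\tau)|\geq(\varepsilon/c_1)^{1/\tau_1}|\C_r(y,\tau)|$ and then invokes Lemma~\ref{3.4} with the parameter $(\varepsilon/c_1)^{1/\tau_1}$. The ingredients and the calibration of parameters are identical to yours; only the logical packaging (direct versus contradiction) differs.
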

\begin{proof}
The reverse  doubling property of $\omega$ (the upper bound in \eqref{eq8})  gives that
$$
\varepsilon\leq \frac{\omega(\CC\cap
\C_r(y,\tau))}{\omega(\C_r(y,\tau))}\leq c_1 \left(
 \frac{|\CC\cap \C_r(y,\tau)|}{|\C_r(y,\tau)|}\right)^{\tau_1}.
$$
  Hence
$$
|\CC\cap \C_r(y,\tau)|\geq \left( \frac{\varepsilon}{c_1}
\right)^{\frac{1}{\tau_1}} |\C_r(y,\tau)|.
$$
The assertion holds after applying Lemma~\ref{3.4} with $\varepsilon$ replaced by $\left( \frac{\varepsilon}{c_1}
\right)^{\frac{1}{\tau_1}}.$
\end{proof}

We are going to derive now the power decay estimate  of the upper
level set $\CC$ with respect to $A_{\frac{p}{2}}$-weights.
\begin{lem}\label{3.6}
Under the assumptions of  Lemma~\ref{3.5} we suppose additionally that
\begin{equation}\label{eq3.14}
\Theta(1)=\frac{\omega(\CC\cap \C_1(y,\tau))}{\omega(\C_1(y,\tau))}<\varepsilon
\end{equation}
with $\CC$ as in \eqref{eq3.9}.
Then for each $ k=1,2,\ldots,$  we have
\begin{align}\label{eq3.15}
\nonumber
&\omega\left(\{(x,t)\in Q\colon  \M(|Du|^2)>\lambda_1^{2k}    \}\right)\leq
\varepsilon_1^k\omega\left(\{(x,t)\in Q\colon  \M(|Du|^2)>1   \}\right)\\
&\qquad\qquad\qquad\qquad\qquad +\sum_{i=1}^k \varepsilon_1^i \omega\left(\left\{(x,t)\in Q\colon
\M(|\bF|^2)>\delta^2\lambda_1^{2(k-i)} \right\}\right)
\end{align}
where $\varepsilon_1=\varepsilon[\omega]^2_{\frac{p}{2}} \left(
\frac{10\sqrt2}{1-\delta}  \right)^{\frac{p}2(n+2)}.$
\end{lem}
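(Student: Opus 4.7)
The plan is to establish \eqref{eq3.15} by induction on $k$, where each inductive step is a single application of Lemma~\ref{3.3} with $q=\frac{p}{2}$. For $j=0,1,\ldots,$ I introduce the shorthand $\CC_j=\{(x,t)\in Q\colon \M(|Du|^2)>\lambda_1^{2j}\}$ and, for $j\geq 1$, set $\DD_j=\CC_{j-1}\cup\{(x,t)\in Q\colon \M(|\bF|^2)>\delta^2\lambda_1^{2(j-1)}\}$, so that $\CC_1=\CC$ and $\DD_1=\DD$ in the notation of \eqref{eq3.9}--\eqref{eq3.10}.

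For the base case $k=1$, I would feed the couple $(\CC_1,\DD_1)$ directly into Lemma~\ref{3.3}: hypothesis \eqref{eq3.3} is exactly the standing assumption \eqref{eq3.14}, while hypothesis \eqref{eq3.4} is precisely the content of Lemma~\ref{3.5}. Lemma~\ref{3.3} then yields $\omega(\CC_1)\leq\varepsilon_1\omega(\DD_1)$, and subadditivity of $\omega$ on $\DD_1$ produces \eqref{eq3.15} at $k=1$. For the inductive step from $k$ to $k+1$, I would apply the same machinery to the couple $(\CC_{k+1},\DD_{k+1})$. Here I exploit linearity: since \eqref{DP} is linear in $u$ and $\bF$, the rescaled function $v:=u/\lambda_1^k$ is a weak solution of the same Cauchy--Dirichlet problem with nonhomogeneous term $\bF/\lambda_1^k$, and the pair $(\CC_{k+1},\DD_{k+1})$ takes precisely the form of $(\CC,\DD)$ in \eqref{eq3.9}--\eqref{eq3.10} once $(u,\bF)$ is replaced by $(v,\bF/\lambda_1^k)$. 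Consequently, Lemma~\ref{3.5} applied to this rescaled couple supplies hypothesis \eqref{eq3.4} of Lemma~\ref{3.3}, while hypothesis \eqref{eq3.3} follows from the monotonicity inclusion $\CC_{k+1}\subseteq \CC_1$ combined with \eqref{eq3.14}. The conclusion of Lemma~\ref{3.3} then reads
$$
\omega(\CC_{k+1})\leq\varepsilon_1\omega(\CC_k)+\varepsilon_1\omega\big(\{(x,t)\in Q\colon\M(|\bF|^2)>\delta^2\lambda_1^{2k}\}\big),
$$
and substituting the inductive hypothesis into the first term on the right followed by a shift $i\mapsto i+1$ in the summation index delivers \eqref{eq3.15} at level $k+1$.

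The main obstacle amounts to bookkeeping in the geometric telescoping of the error sum, together with checking that all structural hypotheses are preserved under the rescaling $u\mapsto v=u/\lambda_1^k$. The $(\delta,1)$-vanishing of codimension $1$ pertains only to the coefficient matrix $\ba$, the Reifenberg flatness is intrinsic to $\partial\Omega$, and the weight $\omega\in A_{p/2}$ is independent of the unknown; all of these are inherited automatically by the rescaled problem, so no extra verification beyond the re-indexing of the summation is required.
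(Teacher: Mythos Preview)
Your proposal is correct and follows essentially the same approach as the paper: induction on $k$, with the inductive step handled by rescaling the solution and re-applying Lemmas~\ref{3.3} and~\ref{3.5}. The only cosmetic difference is that the paper rescales by a single factor $\lambda_1$ (setting $u_1=u/\lambda_1$), applies the inductive hypothesis at level $k$ to $u_1$, and then invokes the $k=1$ case once more to pass from the level $\lambda_1^2$ to the level $1$, whereas you rescale by $\lambda_1^k$, apply Lemma~\ref{3.3} once to obtain the recursion $\omega(\CC_{k+1})\leq \varepsilon_1\omega(\CC_k)+\varepsilon_1\omega(\{\M(|\bF|^2)>\delta^2\lambda_1^{2k}\})$, and then substitute the inductive hypothesis; the two organizations are equivalent and your bookkeeping is sound.
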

\begin{proof}
Lemma~\ref{3.5} and condition \eqref{eq3.14} ensure
the validity of the hypotheses of  Lemma~\ref{3.3} for  the sets
\eqref{eq3.9} and \eqref{eq3.10}. Thus, we get  by \eqref{eq3.3a}
\begin{align*}
\omega\left(\{(x,t)\in Q\colon  \M(|Du|^2)>\lambda_1^2\}\right)
\leq &\ \varepsilon_1\omega\left(\{{(x,t)\in Q\colon  \M(|Du|^2)>1 }\} \right)\\
& + \varepsilon_1\omega\left(\{(x,t)\in Q\colon  \M(|\bF|^2)>\delta^2\}
\right),
\end{align*}
where $\varepsilon_1= \varepsilon [\omega]_{\frac{p}{2}}
\left(\frac{10\sqrt2}{1-\delta} \right)^{\frac{p}2(n+2)}.$

The last inequality is exactly \eqref{eq3.15} with $k=1.$ Further, we
proceed with the proof  by induction,  as it is done in
\textsc{Byun}~\cite[Corollary~4.15]{B1}.  Suppose that \eqref{eq3.15} holds true for each weak solution of \eqref{DP}  and for some $k>1.$ Define the functions $u_1=\frac{u}{\lambda_1}$ and
$\bF_1=\frac{\bF}{\lambda_1}.$  It is easy to see that $u_1$ is a
weak solution to the problem \eqref{DP} with a right-hand side
$\bF_1.$ Hence, \eqref{eq3.14} and Lemma~\ref{3.5}  hold  
with the sets $\CC$ and $\DD$ corresponding to $u_1$ as defined by
\eqref{eq3.9} and \eqref{eq3.10}
and according to \eqref{eq3.15}, the inductive assumption holds true for $u_1$ with the same $k>1.$
 The definition of $u_1$ ensures the inductive passage from $k$ to $k+1$ for $u.$ Namely,
\begin{align*}
&\omega\left(\left\{(x,t)\in Q\colon  \M(|Du|^2)>\lambda_1^{2(k+1)}  \right\}\right)\\
&\qquad\qquad =\omega\left(\{(x,t)\in Q\colon  \M(|Du_1|^2)>\lambda_1^{2k}  \}\right)\\
&\qquad\qquad\leq \varepsilon_1^k \omega\left(\{(x,t)\in Q\colon  \M(|Du_1|^2)>1   \}\right)\\
&\qquad\qquad\quad +\sum_{i=1}^k \varepsilon_1^i
\omega\left(\left\{(x,t)\in Q\colon \M(|\bF_1|^2)>\delta^2\lambda_1^{2(k-i)} \right\}\right)\\
&\qquad\qquad = \varepsilon_1^k \omega\left(\{(x,t)\in Q\colon  \M(|Du|^2)>\lambda_1^2 \}\right)\\
&\qquad\qquad\quad +\sum_{i=1}^k \varepsilon_1^i \omega\left(\left\{(x,t)\in Q\colon
\M(|\bF|^2)>\delta^2\lambda_1^{2(k-i)}\lambda^2 \right\}\right)\\
&\qquad\qquad\leq \varepsilon_1^{k+1}\omega\left(\{(x,t)\in Q\colon  \M(|Du|^2)>1   \}\right)\\
&\qquad\qquad\quad +\sum_{i=1}^{k+1} \varepsilon_1^i \omega\left(\left\{(x,t)\in Q\colon
\M(|\bF|^2)>\delta^2\lambda_1^{2(k+1-i)} \right\}\right).
\end{align*}
\end{proof}

The next result follows directly from \textsc{Mengesha} and \textsc{Phuc}~\cite[Lemma~3.7]{MP}.
\begin{lem}\label{3.7}
Let $h\in L^1(Q)$  be a nonnegative   function,  $\omega$ be an $ A_q$-weight,  $q\in(1,\infty)$ and  $\theta>0, \Lambda>1$ be constants. Then $h\in L^q_\omega(Q)$ if and only if
$$
\SSS:=\sum_{k\geq 1}\Lambda^{kq}\omega(\{ (x,t)\in Q\colon  h(x,t)>\theta\Lambda^k  \})<\infty.
$$
Moreover,
$$
c^{-1}\SSS\leq \|h\|^q_{L^q_\omega(Q)}\leq c (\omega(Q)+\SSS),
$$
where $c=c(\theta,\Lambda,q).$
\end{lem}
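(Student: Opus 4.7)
The statement is the standard dyadic discretization of the weighted $L^q$-norm via the distribution function with respect to the measure $d\mu=\omega(x,t)\,dxdt$, so my plan is to derive it from the layer-cake (Cavalieri) identity
\[
\|h\|_{L^q_\omega(Q)}^q=q\int_0^\infty s^{q-1}\omega\bigl(\{(x,t)\in Q\colon h(x,t)>s\}\bigr)\,ds,
\]
which is available because $h\ge 0$, $h\in L^1(Q)$, and $\omega$ is a locally integrable Muckenhoupt weight so that $d\mu=\omega\,dxdt$ is a (finite, since $\omega\in A_q$ is locally integrable and $Q$ is bounded) Borel measure on $Q$ for which $\omega(Q)<\infty$.

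The plan is to split the integration domain as $[0,\infty)=[0,\theta)\cup\bigcup_{k\ge 0}[\theta\Lambda^k,\theta\Lambda^{k+1})$ and exploit that the distribution function $s\mapsto\omega(\{h>s\})$ is non-increasing while $s^{q-1}$ is monotone. On each dyadic-type interval $[\theta\Lambda^k,\theta\Lambda^{k+1})$ one has
\[
\theta^q\Lambda^{kq}(\Lambda-1)\,\omega(\{h>\theta\Lambda^{k+1}\})\le\int_{\theta\Lambda^k}^{\theta\Lambda^{k+1}}s^{q-1}\omega(\{h>s\})\,ds\le \theta^q\Lambda^{(k+1)q-1}(\Lambda-1)\,\omega(\{h>\theta\Lambda^k\}),
\]
while the contribution of $[0,\theta)$ is trivially controlled by $\theta^q\omega(Q)/q$.

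Summing the right-hand inequalities and reindexing via $k\mapsto k+1$ produces
\[
q\int_\theta^\infty s^{q-1}\omega(\{h>s\})\,ds\le q\theta^q\Lambda^{q-1}(\Lambda-1)\Bigl(\omega(Q)+\SSS\Bigr),
\]
where the $k=0$ term is absorbed into the $\omega(Q)$-piece (since $\omega(\{h>\theta\})\le\omega(Q)$); combining with the trivial $[0,\theta)$-estimate yields $\|h\|_{L^q_\omega(Q)}^q\le c(\omega(Q)+\SSS)$ with $c=c(\theta,\Lambda,q)$. Summing the left-hand inequalities, using monotonicity to bound $\omega(\{h>\theta\Lambda^k\})$ in terms of the integral over the preceding interval $[\theta\Lambda^{k-1},\theta\Lambda^k)$, and extending the $k\ge 1$ sum to an integral over $[\theta,\infty)\subset[0,\infty)$ gives
\[
\SSS\le\frac{\Lambda^q}{\theta^q(\Lambda-1)}\int_0^\infty s^{q-1}\omega(\{h>s\})\,ds=\frac{\Lambda^q}{q\theta^q(\Lambda-1)}\,\|h\|_{L^q_\omega(Q)}^q,
\]
which is the reverse bound $c^{-1}\SSS\le\|h\|_{L^q_\omega(Q)}^q$ and, in particular, also gives the equivalence $h\in L^q_\omega(Q)\Longleftrightarrow \SSS<\infty$.

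There is no genuine obstacle here: the $A_q$ hypothesis enters only insofar as it guarantees that $\omega$ is locally integrable (so $\omega(Q)<\infty$ and the layer-cake formula applies), and the entire argument is a bookkeeping exercise to track the constants $\theta$, $\Lambda$, $q$ arising from the geometric progression. The only mild care required is in the comparison of the $k=0$ term with $\omega(Q)$, which is why $\omega(Q)$ appears as an additive correction in the upper estimate $\|h\|_{L^q_\omega(Q)}^q\le c(\omega(Q)+\SSS)$ but not in the lower one.
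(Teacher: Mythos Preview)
Your argument is correct: the layer-cake identity combined with the dyadic decomposition $[0,\infty)=[0,\theta)\cup\bigcup_{k\ge0}[\theta\Lambda^k,\theta\Lambda^{k+1})$ and monotonicity of the distribution function gives exactly the two-sided bound with constants depending only on $\theta,\Lambda,q$. The computations you outline check out (the $k=0$ term of the upper-bound sum is absorbed into $\omega(Q)$, and a reindex $k\mapsto k-1$ in the lower-bound sum yields the stated constant $\Lambda^q/(q\theta^q(\Lambda-1))$).

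As for comparison with the paper: the paper does not give its own proof of this lemma at all --- it simply cites \textsc{Mengesha} and \textsc{Phuc}~\cite[Lemma~3.7]{MP}. Your write-up is therefore a self-contained replacement for that citation, and it is exactly the standard argument one expects (and presumably the one in \cite{MP}). One small remark: you are right that the $A_q$ hypothesis is not really used beyond local integrability of $\omega$; the lemma holds for any finite Borel measure $\mu$ on $Q$ in place of $\omega\,dxdt$.
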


We are in a position now to prove Theorem~\ref{MainTh}.
\begin{proof}
Since $\omega\in A_{\frac{p}{2}}$ then  $\omega\in A_p$ with
$[\omega]_p\leq [\omega]_{\frac{p}{2}}.$ Recall that $\bF\in
L^p_\omega(Q)$ and from the scaling  invariance property of
\eqref{DP} under a normalization, we assume that
$\|\bF\|_{L^p_\omega(Q)}\leq \delta$ with $\delta>0$ small enough.
Hence
\begin{equation}\label{eq3.16}
\|\bF\|^2_{L^p_\omega(Q)}  = \|
|\bF|^2\|_{L_\omega^{\frac{p}{2}}(Q)}\leq \delta^2.
\end{equation}
Then we need to prove boundedness of the norm of the gradient
$\|Du\|_{L^p_\omega(Q)}.$ Because of the properties of the maximal
function (see Lemma~\ref{3.2}), it is enough to get
$$
\|\M(|Du|^2)\|_{L^{\frac{p}{2}}_\omega(Q)}\leq c.
$$
For this goal, we apply Lemma~\ref{3.7} with $h=\M(|Du|^2),$
$\Lambda=\lambda_1^2,$ $q=\frac{p}{2},$ $\theta=1.$ By the reverse
doubling property \eqref{eq8}, we have
$$
\frac{\omega(\CC\cap \C_1(y,\tau))}{\omega(\C_1(y,\tau))}\leq
c\left( \frac{|\CC\cap \C_1(y,\tau)|}{|\C_1(y,\tau)|}
\right)^{\tau_1}.
$$
To estimate the right-hand side, we note that 
\begin{align*}
\frac{|\CC\cap \C_1(y,\tau)|}{|\C_1(y,\tau)|}\leq &\ c|\CC|=
c| \{(x,t)\in Q\colon  \M(|Du|^2)>\lambda_1^2   \} |\\
\nonumber
\leq &\ c \int_Q\M(|Du|^2)\,dxdt\leq c \int_Q |Du|^2\, dxdt\\
\leq &\ c \int_Q|\bF(x,t)|^2\,dxdt\leq \frac{c |Q|
}{\omega(Q)^{\frac{2}{p}}}\left( \int_Q |\bF(x,t)|^{2\frac{p}2}
\omega(x,t)\,dxdt  \right)^{\frac2p}\leq c \delta^2,
\end{align*}
for almost all $(y,\tau)\in \CC,$ where we have used \eqref{weight2}.    Taking $\delta$ small enough, we get by \eqref{eq8} that
$$
\Theta(1)=\frac{\omega(\CC\cap \C_1(y,\tau))}{\omega(\C_1(y,\tau))}\leq
c\delta^{2\tau_1} <\varepsilon,
$$
which ensures the validity of \eqref{eq3.14}. Therefore
Lemma~\ref{3.6} gives
\begin{align*}
\SSS:=&\ \sum_{k\geq 1} \lambda_1^{2k\frac{p}{2}}   \omega\left(\{(x,t)\in Q\colon
 \M(|Du|^2)>\lambda_1^{2k}\}\right)\\
\leq &\ \sum_{k\geq1} \lambda_1^{kp}\varepsilon_1^k  \omega\left(\{(x,t)\in Q\colon  \M(|Du|^2)>1\}\right)\\
&\ + \sum_{k\geq 1}\sum_{i=1}^k \lambda_1^{kp}\varepsilon_1^i
\omega\left(\left\{(x,t)\in Q\colon  \M(|\bF|^2)>\delta^2\lambda_1^{2(k-i)}\right\}\right)\\
\leq&\ \sum_{k\geq 1}\big(\lambda_1^p\varepsilon_1  \big)^k
\omega(Q)\\
&\ + \sum_{i\geq 1} \big(\lambda_1^p\varepsilon_1  \big)^i \sum_{k\geq i}
\lambda_1^{p(k-i)} \omega\left(\left\{(x,t)\in Q\colon  \M(|\bF|^2)>\delta^2\lambda_1^{2(k-i)}\right\}\right)\\
\leq&\ \omega(Q) \sum_{k\geq 1}\big(\lambda_1^p\varepsilon_1  \big)^k+ \SSS'.
\end{align*}
Let us note that
\begin{align*}
&\sum_{k\geq i} \lambda_1^{p(k-i)}\omega \left(\left\{ (x,t)\in Q\colon  \M(|\bF|^2)>\delta^2\lambda_1^{2(k-i)} \right\}\right)\\
&\qquad=
\sum_{k\geq i} \big( \lambda_1^{2(k-i)}\big)^{\frac{p}2}\omega \left(\left\{ (x,t)\in Q\colon   \M\left(
\frac{|\bF|^2}{\delta^2}\right)>\lambda_1^{2(k-i)} \right\}\right)\\
&\qquad \leq C\left\|
\frac{|\bF|^2}{\delta^2}\right\|^{\frac{p}{2}}_{L^{\frac{p}{2}}_\omega(Q)}=\frac{c}{\delta^p}
\big\| |\bF|^2 \big\|^{\frac{p}{2}}_{L^{\frac{p}{2}}_\omega(Q)}\leq
c,
\end{align*}
where \eqref{eq3.16} has been used in the last estimate. Hence
$\SSS'\leq c \sum_{i\geq 1} (\lambda_1^p\varepsilon_1)^i.$

Taking $\varepsilon$ small enough in a way that
$\lambda_1^p\varepsilon_1<1,$ and consequently also $\delta,$ we get $\SSS<\infty$ which gives
$$
\|\M(|Du|^2)\|_{L^{\frac{p}{2}}_\omega(Q)}\leq
c(\omega(Q)+\SSS)<\infty.
$$

This way, Lemmas~\ref{3.2} and \ref{3.7} imply
$$
\|Du\|_{L^p_\omega(Q)}\leq c \left(\|\bF\|_{L^p_\omega(Q)}+\omega(Q)\right)
$$
and the desired estimate which completes the proof of Theorem~\ref{MainTh}
follows by the Banach inverse mapping theorem.
\end{proof}

\section{Morrey regularity of the gradient}\label{sec5}

A direct consequence of Theorem \ref{MainTh} is the Morrey
regularity of the spatial gradient of the weak solution to the problem \eqref{DP}. Recall
that the Morrey spaces $L^{q,\lambda}(Q)$ with $q>1$ and $\lambda\in(0,n+2)$ consist of all measurable  functions
 $f\in L^q(Q)$  for which the following norm is finite
$$
\|f\|_{L^{q,\lambda}(Q)}=\left(
\sup_{\underset{\footnotesize 0<r<\text{diam\,}Q}{(y,\tau)\in Q}}\frac1{r^\lambda}\int_{\E_r(y,\tau)\cap Q} |f(x,t)|^q dxdt
\right)^{1/q}<\infty,
$$
where $\E_r(y,\tau)$ is any ellipsoid with radius $r$ and
centered at $(y,\tau)\in Q.$

\begin{thm}\label{th2}
Under the assumptions of Theorem~\ref{MainTh}, suppose in addition that $\bF\in L^{p,\lambda}$ with $p>2$ and $\lambda\in (0, n+2).$ Then
there
exists a small positive constant $\delta=\delta(n,L,\nu,p,\lambda,Q)$
such that if the couple $(\ba,\Omega)$ is $(\delta,R)$-vanishing of codimension $1,$ then the spatial gradient $Du$ of the weak solution $u$ to the problem  \eqref{DP} belongs to $L^{p,\lambda}(Q)$ and satisfies
the estimate
\begin{equation}\label{morrey}
\|Du\|_{L^{p,\lambda}(Q)}\leq c\|\bF\|_{L^{p,\lambda}(Q)}
\end{equation}
with a constant $c$  independent of $u$ and $\bF.$
\end{thm}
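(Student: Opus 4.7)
The plan is to deduce Theorem~\ref{th2} from Theorem~\ref{MainTh} by testing the weighted estimate against a tailor-made family of Muckenhoupt weights of Coifman--Rochberg type, one for each parabolic ellipsoid $\E_r(y,\tau)$ entering the Morrey norm. Fix $(y,\tau)\in Q$ and $r\in(0,\mathrm{diam}\,Q)$, write $E=\E_r(y,\tau)$, pick an exponent $\alpha\in(\lambda/(n+2),\,1)$, and set
$$
\omega(x,t):=\bigl(\M\chi_E(x,t)\bigr)^{\alpha}.
$$
By the Coifman--Rochberg theorem, $\omega$ lies in $A_1\subset A_{p/2}$ with $[\omega]_{p/2}$ bounded by a constant depending only on $\alpha$ and $n$ (in particular, independent of $E$). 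Since the smallness threshold $\delta$ and the final constant in Theorem~\ref{MainTh} depend on $\omega$ only through $[\omega]_{p/2}$ and the associated doubling/reverse-doubling constants of \eqref{eq8} (which are themselves controlled by $[\omega]_{p/2}$), both may be chosen uniformly in $(y,\tau)$ and $r$.

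Next I would record the elementary pointwise bounds on $\omega$. Lebesgue differentiation gives $\M\chi_E=1$ almost everywhere on $E$, so $\omega\geq c(\alpha,n)>0$ on $E$; while for $(x,t)\notin E$ the standard estimate $\M\chi_E(x,t)\leq c(n)\,|E|/|\E_{\rho(x-y,t-\tau)}(y,\tau)|$ yields
$$
\omega(x,t)\leq c(n,\alpha)\Bigl(\frac{r}{r+\rho(x-y,t-\tau)}\Bigr)^{\alpha(n+2)}.
$$
Applying Theorem~\ref{MainTh} to this $\omega$ and combining the two bounds,
$$
\int_{E\cap Q}|Du|^p\,dxdt\leq c\int_Q |Du|^p\omega\,dxdt\leq c\int_Q |\bF|^p\omega\,dxdt.
$$
I would then split $Q=\bigcup_{k\geq 0}A_k$ dyadically with $A_0=E\cap Q$ and $A_k=(\E_{2^k r}(y,\tau)\setminus\E_{2^{k-1}r}(y,\tau))\cap Q$ for $k\geq 1$; on $A_k$ one has $\omega\leq c\,2^{-k\alpha(n+2)}$, while the Morrey hypothesis gives $\int_{\E_{2^k r}(y,\tau)\cap Q}|\bF|^p\,dxdt\leq (2^k r)^{\lambda}\|\bF\|_{L^{p,\lambda}(Q)}^p$. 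Summing yields
$$
\int_Q |\bF|^p\omega\,dxdt\leq c\,r^{\lambda}\|\bF\|_{L^{p,\lambda}(Q)}^p\sum_{k\geq 0}2^{k(\lambda-\alpha(n+2))},
$$
and the geometric series converges precisely because $\alpha>\lambda/(n+2)$. Dividing by $r^{\lambda}$ and passing to the supremum over $(y,\tau)\in Q$ and $r\in(0,\mathrm{diam}\,Q)$ then produces \eqref{morrey}.

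The main obstacle I expect is the careful bookkeeping needed to certify that the constants furnished by Theorem~\ref{MainTh} are genuinely uniform across the entire family $\{(\M\chi_{\E_r(y,\tau)})^{\alpha}\}_{(y,\tau)\in Q,\,0<r<\mathrm{diam}\,Q}$ of weights, and not merely finite weight by weight. A re-reading of Section~\ref{sec4}---specifically the way $\omega$ enters Lemmas~\ref{3.3}, \ref{3.5}, \ref{3.6} and the concluding summation---confirms that it does so only through $[\omega]_{p/2}$ and quantities controlled by it. Since the Coifman--Rochberg bound for $(\M\chi_E)^{\alpha}$ is universal in $E$, this uniformity is secured and the reduction described above is legitimate.
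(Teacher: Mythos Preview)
Your proposal is correct and follows essentially the same route as the paper: both arguments introduce the Coifman--Rochberg weight $(\M\chi_{\E_r(y,\tau)})^{\alpha}$ with $\alpha\in(\lambda/(n+2),1)$, invoke Theorem~\ref{MainTh} for that weight, and then control $\int_Q|\bF|^p\omega\,dxdt$ by a dyadic annular decomposition that produces a convergent geometric series. Your explicit discussion of the uniformity of the constants in $(y,\tau)$ and $r$---which the paper leaves largely implicit---is a welcome addition, and the minor differences (integrating over $Q$ rather than extending $\bF$ by zero to $\R^{n+1}$, and a shift in the dyadic indexing) are cosmetic.
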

\begin{proof}
Suppose  that  $\bF\colon Q\to \R^n$ is  extended as zero to the whole  $\R^{n+1}.$ Fix a point $(x_0,t_0)\in Q,$ $r>0,$ and consider the ellipsoid  $\E_r(x_0,t_0)$  with a characteristic function $\chi_{\E_r(x_0,t_0)}$   and maximal function $\M \chi_{\E_r(x_0,t_0)}(x,t).$  It is proved by \textsc{Coifman} and  \textsc{Rochberg}  (see \cite[Proposition~2]{CR} or \textsc{Torchinsky}~\cite[Proposition~IX.3.3]{To}) that
$$
\left( \M \chi_{\E_{(x_0,t_0)}} \right)^\sigma\in A_1\quad \text{ when }\quad 0\leq \sigma<1.
$$
Hence, by the definition \eqref{A1} of an $A_1$-weight,  we get
$$
\frac1{|\E_r(x_0,t_0)|} \int_{\E_r(x_0,t_0)} \Big(\M\chi_{\E_r(x_0,t_0)}(x,t)  \Big)^\sigma\, dxdt
\leq A\es_{\E_r(x_0,t_0)}\Big( \M\chi_{\E_r(x_0,t_0)}\Big)^\sigma.
$$
Because of the increasing property of the $A_q$-classes we have
$\left(\M\chi_{\E_r(x_0,t_0)}\right)^\sigma\in A_{\frac{p}{2}}$ for
each $p>2$ with a bound $\big[( \M\chi_{\E_r(x_0,t_0)})^\sigma
\big]_{\frac{p}{2}}\leq A$ depending only on $n,$ $p$ and $\sigma.$
Therefore, applying the result of Theorem~\ref{MainTh}, we obtain
\begin{align*}
I&\ =\int_{\E_r(x_0,t_0)\cap Q} |Du|^p dxdt= \int_Q |Du|^p \big( \chi_{\E_r(x_0,t_0)} \big)^\sigma dxdt\\
&\ \leq \int_Q|Du|^p \big( \M\chi_{\E_r(x_0,t_0)} \big)^\sigma dxdt\leq c \int_Q|\bF(x,t)|^p \big( \M\chi_{\E_r(x_0,t_0)} \big)^\sigma dxdt\\
&\ =c \int_{\R^{n+1}}|\bF(x,t)|^p \big( \M\chi_{\E_r(x_0,t_0)}
\big)^\sigma dxdt.
\end{align*}
Employing the dyadic decomposition of $\R^{n+1}$ related to $\E_r(x_0,t_0),$
$$
\R^{n+1}=\E_{2r}(x_0,t_0)\cup \left( \bigcup_{k=1}^\infty \E_{2^{k+1}r}(x_0,t_0) \setminus \E_{2^kr} (x_0,t_0)  \right),
$$
the last bound becomes
\begin{align*}
I\leq&\ c\left( \int_{\E_{2r}(x_0,t_0)}|\bF(x,t)|^p   \big( \M\chi_{\E_r(x_0,t_0)} \big)^\sigma dxdt\right.\\
&\qquad + \left.\sum_{k=1}^\infty \int_{\E_{2^{k+1}r}(x_0,t_0)\setminus \E_{2^kr}(x_0,t_0)} |\bF(x,t)|^p  \big( \M\chi_{\E_r(x_0,t_0)} \big)^\sigma dxdt\right)\\
=&\ I_0+\sum_{k=1}^\infty I_k.
\end{align*}

Let us estimate now the maximal function of $\chi_{\E_r(x_0,t_0)}.$ It follows by the definition that
\begin{align*}
\M\chi_{\E_r(x_0,t_0)}(y,\tau)&= \sup_{\E_{\bar r}(y,\tau)}\frac1{|\E_{\bar r}(y,\tau)|} \int_{\E_{\bar r}(y,\tau)} \chi_{\E_r(x_0,t_0)}(x,t)\,dxdt\\
&=\sup_{\E_{\bar r}(y,\tau)} \frac{|\E_{\bar r}(y,\tau)\cap
\E_r(x_0,t_0)|}{| \E_{\bar r}(y,\tau) |}
 =\sup_{\bar r}\frac{r^{n+2}}{\bar r^{n+2}},
\end{align*}
where  $\E_{\bar r}(y,\tau)$ is an arbitrary ellipsoid centered at some point $(y,\tau)\in \R^{n+1}.$

If $(y,\tau)\in \E_r(x_0,t_0),$ then  $\M\chi_{\E_r(x_0,t_0)}(y,\tau)=1.$
On the other hand, if  $(y,\tau)\in \E_{2r}(x_0,t_0)\setminus
\E_r(x_0,t_0),$ then
$$
\M\chi_{\E_r(x_0,t_0)}(y,\tau)= \frac{r^{n+2}}{(2r)^{n+2}}=\frac1{2^{n+2}}<1.
$$
Let $(y,\tau)\in  \E_{2^{k+1}r}(x_0,t_0)\setminus \E_{2^kr}(x_0,t_0).$
We have
\begin{align*}
2^{k-1}r\leq&\ 2^kr-r\leq \rho(y-x_0, \tau-t_0)-r \leq \bar r\\
 \leq&\ \rho(y-x_0,\tau-t_0)+r\leq 2^{k+1}r+r,
\end{align*}
and the maximal function  majorizes  as
$$
\M\chi_{\E_r(x_0,t_0)}(y,\tau)\leq \frac{r^{n+2}}{(2^{k-1}r)^{n+2}}=\frac1{2^{(k-1)(n+2)}}.
$$

We are in a position now to estimate the terms $I_k,$ $k=0,1,\ldots.$
Namely,
\begin{align*}
I_0\leq&\ \int_{\E_{2r}(x_0,t_0)} |\bF(x,t)|^p\, dxdt \leq C(n) r^\lambda\|\bF\|^p_{L^{p,\lambda}(Q)},\\
I_k\leq&\ \frac1{2^{(k-1)(n+2)\sigma}} \int_{\E_{2^{k+1}r\setminus \E_{2^kr}}(x_0,t_0)}
|\bF(x,t)|^p\,dxdt\\
\leq&\  \frac1{2^{(k-1)(n+2)\sigma}} \int_{\E_{2^{k+1}r}(x_0,t_0)}
|\bF(x,t)|^p\,dxdt\\
=&\ \frac{(2^{k+1}r)^\lambda}{2^{(k-1)(n+2)\sigma}}\frac1{(2^{k+1}r)^\lambda}
\int_{\E_{2^{k+1}r} }
|\bF(x,t)|^p\,dxdt\\
\leq&\  C(n,\sigma,\lambda)  2^{k(\lambda-(n+2)\sigma)} r^\lambda\|\bF\|^p_{L^{p,\lambda}(Q)},
\end{align*}
which leads to
$$
\int_{\E_r(x_0,t_0)\cap Q} |Du|^p dxdt\leq C\left(1+\sum_{k=1}^\infty 2^{\lambda-(n+2)\sigma}  \right) r^\lambda \|\bF\|^p_{L^{p,\lambda}(Q)}.
$$

At this point we choose $\sigma\in\left(\frac{\lambda}{n+2},1\right)$ in order to ensure convergence of the series above. To get the
desired estimate \eqref{morrey}, it remains to divide the both sides by $r^\lambda$ and to take supremum with respect to
$0<r<\text{diam\,}Q$ and $(x_0,t_0)\in Q.$
\end{proof}

\section{Linear parabolic  systems in divergence form}

The previous results could be easily extended to the case of
nonhomogeneous parabolic systems in divergence form
\begin{equation}\label{system}
\begin{cases}
u_t^i-D_\alpha\big( a^{\alpha\beta}_{ij}(x,t)D_\beta u^j  \big)=D_\alpha f_\alpha^i(x,t)
& \text{ in } Q,\\
u^i(x,t)=0 & \text{ on } \partial_P Q,
\end{cases}
\end{equation}
for $i=1,\ldots,m.$

The tensor matrix of the coefficients
$$\bA=\{a^{\alpha\beta}_{ij}\}\colon \R^{n+1}\to \R^{mn\times mn}$$  is
assumed to be uniformly bounded and uniformly parabolic, namely, we
suppose that there exists positive constants $L$ and $\nu$ such
that
\begin{equation}\label{parab1}
\|\bA\|_{L^\infty(\R^{n+1}, \R^{mn\times mn})}\leq L,\qquad
a^{\alpha\beta}_{ij}(x,t)\xi^i_\alpha\xi^j_\beta\geq \nu |\xi|^2
\end{equation}
for all matrices $\xi\in \M^{m\times n}$ and for almost every
$(x,t)\in \R^{n+1}.$

When the nonhomogeneous term
$\bF=\{f_\alpha^i\}$ belongs to $L^2(Q,\R^{mn}),$ the Cauchy--Dirichlet
problem \eqref{system} has a unique weak solution $\bu=(u_1,\ldots,
u_m)$ with the standard $L^2$-estimate
$$
\|D\bu\|_{L^2(Q,\R^{nm})}\leq c\|\bF\|_{L^2(Q,\R^{nm})},
$$
where $c$ is a positive constant depending only on $n,$ $m,$ $L,$ $\nu$ and $|Q|.$ In particular, the weak solution of \eqref{system} belongs to
$$
H^{\frac{1}{2}}(0,T;L^2(\Omega, \R^m))\cap L^2(0,T;
H_0^1(\Omega,\R^m)),
$$
and satisfies the estimate
$$
\|\bu\|_{H^{\frac{1}{2}}(0,T;L^2(\Omega, \R^m))\cap L^2(0,T;
H_0^1(\Omega,\R^m))}+\|D\bu\|_{L^2(Q, \R^{mn})}\leq c
\|\bF\|_{L^2(Q,\R^{mn})},
$$
where the constant $c$ is independent of $\bu$ and $\bF.$

The proofs given in Sections~\ref{sec4} and \ref{sec5} apply also to the weak solutions of the system \eqref{system}. That is why, we shall restrict ourselves only to announce the corresponding regularity results.

\begin{thm}\label{th3} 
Assume \eqref{parab1} and let $p\in(2,\infty)$ and  $\omega\in
A_{\frac{p}{2}}.$ There exists a small positive constant
$\delta=\delta(n,m,L,\nu,p,\omega)$ such that if the couple $(\bA,\Omega)$ is
$(\delta,R)$-vanishing of codimension 1 and $\bF\in
L^p_\omega(Q,\R^{mn}),$ then the spatial gradient $D\bu$ of the weak
solution $\bu$ to \eqref{system} lies in $ L^p_\omega(Q,\R^{mn})$ and satisfies the estimate
$$
\|D\bu\|_{L^p_\omega(Q,\R^{mn})}\leq c
\|\bF\|_{L^p_\omega(Q,\R^{mn})},
$$
with a constant $c$ independent of $\bu$ and $\bF.$
\end{thm}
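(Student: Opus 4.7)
The plan is to verify that every ingredient of the scalar argument developed in Sections~\ref{sec4}--\ref{sec5} is robust enough to carry over verbatim to the vector-valued setting, and then to point out where genuinely new ingredients are required. The overall architecture is identical: reduce to a normalized problem with $\|\bF\|_{L^p_\omega(Q,\R^{mn})}\le\delta$, establish a power-decay estimate on the upper level sets of $\M(|D\bu|^{2})$ with respect to the weight $\omega$, and then sum via Lemma~\ref{3.7}.

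The first step is to fix the functional-analytic framework: the standard Gal\"erkin construction and the hypothesis \eqref{parab1} produce a unique weak solution $\bu$ of \eqref{system} together with the $L^{2}$ energy bound, which replaces \eqref{eq5}. Since the Vitali-type covering Lemma~\ref{3.3} is purely measure-theoretic — it uses only the strong doubling property \eqref{eq8} of $\omega\in A_{q}$ together with the geometry of Reifenberg-flat domains — it applies without modification to the sets $\CC$ and $\DD$ defined as in \eqref{eq3.9}--\eqref{eq3.10} with $|Du|$ replaced by $|D\bu|$. Likewise, the reverse-doubling argument in Lemma~\ref{3.5} and the inductive scaling procedure of Lemma~\ref{3.6} depend only on the \emph{linearity} of the operator (so that $\bu/\lambda_{1}$ solves the system with datum $\bF/\lambda_{1}$) and on the fact that $\CC\subset\DD$, so both lemmas transfer line-by-line.

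The one place where vector-valued considerations are not automatic is the approximation Lemma~\ref{3.4}. Its proof in the scalar case rests on three comparison estimates for weak solutions of homogeneous problems whose coefficients depend \emph{only} on $x_{1}$: an interior Lipschitz bound, a boundary Lipschitz bound in a half-cylinder after flattening, and a Caccioppoli-type perturbation estimate showing that $\bu$ differs from such a reference solution by $O(\delta)$ in energy. For systems \eqref{system} with $x_{1}$-measurable, partially-BMO coefficients $\bA$, the corresponding interior and boundary $C^{0,1}$-estimates for the reference system are available by the results of Dong--Kim~\cite{DongKim} and Dong~\cite{Dong} (no De Giorgi--Nash--Moser or maximum-principle argument is used; everything proceeds via $L^{2}$-based Campanato/Morrey decay of $D\bu$ in the tangential variables combined with the ODE structure of the system in $x_{1}$). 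The Caccioppoli comparison in the two cases \eqref{eq12a} and \eqref{eq13}--that the interior versus boundary configurations--follows by subtracting the systems and testing with $\bu-\bu_{\mathrm{ref}}$. Combining these three ingredients with the Hardy--Littlewood maximal inequality in exactly the form used in \cite[Lemma~5.3]{BPW} delivers the vector-valued version of Lemma~\ref{3.4}.

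With the approximation lemma in hand, the proof of Theorem~\ref{th3} becomes an unchanged copy of the proof of Theorem~\ref{MainTh}: choose $\varepsilon$ (and hence $\delta$) so small that $\lambda_{1}^{p}\varepsilon_{1}<1$, invoke the $A_{p/2}$-property of $\omega$ to control the initial level set via \eqref{weight2} and the $L^{2}$-estimate, apply Lemma~\ref{3.6} to bound the series
\[
\SSS=\sum_{k\ge 1}\lambda_{1}^{kp}\,\omega\bigl(\{(x,t)\in Q\colon \M(|D\bu|^{2})>\lambda_{1}^{2k}\}\bigr),
\]
and then conclude through Lemmas~\ref{3.2} and \ref{3.7} together with the Banach inverse-mapping theorem. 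The main obstacle, as indicated, is the transfer of the approximation lemma, which hinges on having Lipschitz regularity up to the flat boundary for homogeneous systems with coefficients that are merely measurable in one direction; once that ingredient is invoked from \cite{DongKim,Dong}, the rest of the machinery of Section~\ref{sec4} is insensitive to whether the unknown is scalar or vector-valued.
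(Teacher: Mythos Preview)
Your proposal is correct and takes the same approach as the paper. In fact, the paper does not give a proof of Theorem~\ref{th3} at all: it simply states that ``the proofs given in Sections~\ref{sec4} and \ref{sec5} apply also to the weak solutions of the system \eqref{system}'' and then announces the result. Your write-up spells out precisely why that transfer works --- isolating the approximation Lemma~\ref{3.4} as the only step requiring genuine input for systems and pointing to \cite{DongKim,Dong} for the needed Lipschitz estimates --- which is more than the paper itself supplies.
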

\begin{cor}
Under the assumptions of Theorem~\ref{th3}, there is a small positive constant
$\delta=\delta(n,m,L,\nu,p,\lambda)$ such that if the couple $(\bA,\Omega)$ is $(\delta,R)$-vanishing of codimension 1 and
$\bF\in
L^{p,\lambda}(Q, \R^{mn})$ with $p\in(2,\infty)$ and $\lambda\in(0,n+2),$
then $D\bu\in  L^{p,\lambda}(Q, \R^{mn})$ and
$$
\|D\bu\|_{  L^{p,\lambda}(Q, \R^{mn})}\leq c \| \bF\|_{
L^{p,\lambda}(Q, \R^{mn})}
$$
with a constant  $c$ independent of $\bu$ and $\bF.$
\end{cor}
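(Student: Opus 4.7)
The plan is to mimic verbatim the proof of Theorem~\ref{th2}, replacing the scalar gradient estimate from Theorem~\ref{MainTh} with its vectorial counterpart from Theorem~\ref{th3}. The key observation is that the Coifman--Rochberg argument is purely a weight-theoretic device whose output is a weight with a universal $A_1$-constant; feeding this weight into any weighted $L^p$ Calder\'on--Zygmund estimate converts an Orlicz-like pointwise majorization into a Morrey bound through a dyadic decomposition. Nothing in this bridge uses the scalar character of the operator, so it applies to the system \eqref{system} once Theorem~\ref{th3} is on hand.

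First I would extend $\bF$ by zero outside $Q,$ fix $(x_0,t_0)\in Q$ and $r\in(0,\mathrm{diam}\,Q),$ and introduce the weight $\omega_{r,x_0,t_0}(x,t)=\bigl(\M\chi_{\E_r(x_0,t_0)}(x,t)\bigr)^{\sigma}$ for a parameter $\sigma\in(0,1)$ to be chosen. The result of \textsc{Coifman} and \textsc{Rochberg} cited in Theorem~\ref{th2} gives $\omega_{r,x_0,t_0}\in A_1$ with a constant $[\omega_{r,x_0,t_0}]_1$ depending only on $n$ and $\sigma$; by monotonicity of the $A_q$ classes, also $\omega_{r,x_0,t_0}\in A_{p/2}$ uniformly in $r$ and in $(x_0,t_0).$ This uniformity is crucial, and it allows me to apply Theorem~\ref{th3} with $\omega=\omega_{r,x_0,t_0}$ to obtain a constant $c$ independent of the ellipsoid.

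Next, since $\chi_{\E_r(x_0,t_0)}\le\omega_{r,x_0,t_0}$ pointwise, the weighted $L^p$ estimate from Theorem~\ref{th3} yields
\[
\int_{\E_r(x_0,t_0)\cap Q}|D\bu|^p\,dxdt \le \int_{\R^{n+1}}|D\bu|^p\omega_{r,x_0,t_0}\,dxdt \le c\int_{\R^{n+1}}|\bF|^p\omega_{r,x_0,t_0}\,dxdt.
\]
I would then split $\R^{n+1}$ into the dyadic annuli $\E_{2r}(x_0,t_0)$ and $\E_{2^{k+1}r}(x_0,t_0)\setminus\E_{2^k r}(x_0,t_0)$ for $k\ge 1,$ exactly as in the proof of Theorem~\ref{th2}. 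On the central ball $\omega_{r,x_0,t_0}\le 1,$ while on the $k$-th annulus the elementary computation already carried out there gives $\omega_{r,x_0,t_0}\le 2^{-(k-1)(n+2)\sigma}.$ Using the Morrey norm $\|\bF\|_{L^{p,\lambda}(Q)}$ to bound the integral over each $\E_{2^{k+1}r}(x_0,t_0)$ by $(2^{k+1}r)^{\lambda}\|\bF\|^p_{L^{p,\lambda}(Q)},$ the resulting geometric series is
\[
\sum_{k\ge 1} 2^{k\bigl(\lambda-(n+2)\sigma\bigr)},
\]
which converges provided $\sigma>\lambda/(n+2).$ Choosing any $\sigma\in\bigl(\lambda/(n+2),1\bigr)$ (possible since $\lambda<n+2$) completes the estimate after division by $r^\lambda$ and supremum over $(x_0,t_0)\in Q$ and $r\in(0,\mathrm{diam}\,Q).$

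The main obstacle is purely bookkeeping: one must check that the constant $c$ produced by Theorem~\ref{th3} remains uniform when the weight varies over the family $\{\omega_{r,x_0,t_0}\}_{(x_0,t_0,r)},$ and that the threshold $\delta$ for the coefficient oscillation and Reifenberg flatness can be chosen independent of $(x_0,t_0,r).$ Both points are taken care of by the uniform $A_1$ bound from Coifman--Rochberg, which forces a uniform $A_{p/2}$ bound and therefore a single choice of $\delta=\delta(n,m,L,\nu,p,\sigma)=\delta(n,m,L,\nu,p,\lambda)$ valid simultaneously for all ellipsoids. No other genuinely new ingredient beyond Theorem~\ref{th3} is needed.
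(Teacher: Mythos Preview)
Your proposal is correct and follows essentially the same approach as the paper, which does not give an explicit proof of the Corollary but states that the arguments of Sections~\ref{sec4} and~\ref{sec5} carry over verbatim to the system case. The Coifman--Rochberg weight, the dyadic annular decomposition, the pointwise bound on $\M\chi_{\E_r}$ on each annulus, and the choice $\sigma\in(\lambda/(n+2),1)$ are exactly the ingredients of the proof of Theorem~\ref{th2}, and your observation that the uniform $A_1$ bound yields a single $\delta$ for all ellipsoids is the correct justification for the uniformity of the constant. One minor slip: the middle integral in your displayed chain should be over $Q$ rather than $\R^{n+1}$, since $D\bu$ is only defined on $Q$; the paper's proof of Theorem~\ref{th2} handles this correctly.
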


\end{document}